\def\N{{\mathbb N}} 
\def\Z{{\mathbb Z}}
\def\R{{\mathbb R}}
\def\C{{\mathbb C}}
\def\Im{\mathrm{Im}\,}
\def\Re{\mathrm{Re}\,}
\def\supp{\mathrm{supp}\,}
\newcommand{\cO}{\mathcal{O}}
\newcommand{\cF}{\mathcal{F}}
\newcommand{\mcK}{\mathcal{K}}
\newcommand{\mcH}{\mathcal{H}}
\newcommand{\be}{\begin{equation}}
\newcommand{\ee}{\end{equation}}
\def\hB{\widehat{B}}
\def\hL{\widehat{L}}
\newcommand{\inn}[2]{\langle #1,#2 \rangle}
\newcommand{\bfa}{\boldsymbol{a}}
\newcommand{\mydot}{\,\cdot\,}
\newcommand{\nl}{\vskip 5pt\noindent}
\DeclareMathOperator{\re}{Re}
\DeclareMathOperator{\Arg}{Arg}
\newcommand{\wh}[1]{{\widehat{#1}}}
\newcommand{\abs}[1]{{\left\vert\, #1\,\right\vert}}
\newtheorem{theorem}{Theorem}
\newtheorem{lemma}[theorem]{Lemma}
\theoremstyle{definition}
\newtheorem{definition}[theorem]{Definition}
\newtheorem{example}[theorem]{Example}
\newtheorem{proposition}[theorem]{Proposition}
\theoremstyle{remark}
\newtheorem{remark}[theorem]{Remark}
\title{Interpolation and Sampling with Exponential Splines of Real Order}
\author{Peter Massopust}
\address{Centre of Mathematics, Technical University of Munich, Germany}
\email{massopust@ma.tum.de}
\begin{document}

\maketitle

\begin{abstract}
The existence of fundamental cardinal exponential B-splines of positive real order $\sigma$ is established subject to two conditions on $\sigma$ and their construction is implemented. A sampling result for these fundamental cardinal exponential B-splines is also presented.
\vskip 5pt\noindent
\textit{Keywords:} Exponential spline, interpolation, fundamental cardinal spline, sampling, Hurwitz zeta function, Kramer's lemma.
\vskip 5pt\noindent
\textit{MSC (2020):} 11M35, 65D05, 65D07, 94A11, 94A20
\end{abstract}
\section{Introduction}
Cardinal exponential B-splines of order $n\in \N$ are defined as $n$-fold convolution products of exponential functions of the form $e^{a (\cdot)}$ restricted to the unit interval $[0,1].$ More precisely, let $n\in \N$ and $\boldsymbol{a}:=(a_1, \ldots, a_n)\in \R^n$, with at least one $a_j\neq 0$, $j = 1, \ldots, n$. A {\em cardinal exponential B-spline of order $n$ associated with the $n$-tuple of parameters $\bfa$} is defined by \be\label{regE}
E_n^{\bfa} :=\underset{j = 1}{\overset{n}{*}} \left(e^{a_j (\mydot)}\chi\right),
\ee
where $\chi$ denotes the characteristic function of the unit interval $[0,1]$.

This wider class of splines shares several properties with the classical Schoenberg polynomial B-splines, but there are also significant differences that makes them useful for different purposes. In \cite{CM}, an explicit formula for these functions was established and those cases characterized for which the integer translates of an exponential B-spline form a partition of unity up to a nonzero multiplicative factor. In addition, series expansions for $L^2(\R)$--functions in terms of shifted and modulated versions of exponential B-splines were derived and dual pairs of Gabor frames based on exponential B-splines constructed. We remark that exponential B-splines have also been employed to construct multiresolution analyses and to obtain wavelet expansions. (See, e.g., \cite{LY,unserblu00}.) Furthermore, in \cite{CS} it is shown that exponential splines play an important role in setting up a one-to-one correspondence between dual pairs of Gabor frames and dual pairs of wavelet frames. For an application to some numerical methods, we refer the interested reader to \cite{EKD} and \cite{KED}.
\nl
In \cite{m14}, a new class of more general cardinal exponential B-splines, so-called cardinal exponential B-splines of complex order, was introduced, some properties derived and connections to fractional differential operators and sampling theory exhibited.

Classical polynomial B-splines $B$ can be used to derive fundamental splines which are linear combinations of integer-translate of $B$ and which interpolate the set of data points $\{(m, \delta_{m,0}) : m\in \Z\}$. As it turns out, even generalizations of these polynomial B-splines, namely, polynomial B-splines of complex and even quaternionic order, do possess associated fundamental splines provided the order is chosen to lie in certain nonempty subregions of the complex plane or quaternionic space. For details, we refer the interested reader to \cite{fgms} in the former case and to \cite{hm} in the latter.

In this article, we consider cardinal exponential B-splines of positive real order, so-called cardinal fractional exponential B-splines (to follow the terminology already in place for the polynomial B-splines). As we only deal with cardinal splines, we drop the adjective ``cardinal" from now on. By extending the integral order $n\in\N$ of the classical exponential B-splines to real orders $\sigma > 1$, one achieves a higher degree of regularity at the knots.

The structure of this paper is as follows. In Section 2 we define fractional exponential splines and present those properties that are important for the remainder of this article. The fundamental exponential B-spline is constructed in Section 3 following the procedure for the polynomial splines. However, as the Fourier transform of an exponential B-spline includes an additional positive term, the construction and the proof of existence of fundamental exponential B-splines associated with fractional exponential B-splines is more involved. Section 4 deals with a sampling result for fundamental exponential B-splines.
\section{Fractional Exponential B-Splines}\label{sec2}
In order to extend the classical exponential B-splines to incorporate real orders $\sigma$, we work in the Fourier domain. To this end, we take the Fourier transform of an exponential function of the form $e^{ -a x}\chi$, $a\in \R$, and define a \emph{fractional exponential B-spline} in the Fourier domain by
\be\label{expspline}
\widehat{E^\sigma_a} (\xi):= \cF(\xi) := \int_\R e^{ -a x}\chi(x)\,e^{- i x \xi}\,dx = \left(\frac{1-e^{-(a+i\xi)}}{a+i\xi}\right)^\sigma,\quad\xi\in \R.
\ee
Note that we may interpret the above Fourier transform for real-valued argument $\xi$ as a Fourier transform for complex-valued argument by setting $z:= \xi + i\,a$:
\be\label{complexF}
\widehat{E^\sigma_a} (\xi) = \cF(z) := \int_\R \chi(x)\,e^{- z x}\,dx,\quad z\in \C.
\ee
It can be shown \cite{m14} (for complex $\sigma$) that the function
\[
\Xi(\xi, a) := \frac{1-e^{-(a+i\xi)}}{a+i\xi},
\]
is only well-defined for $a \geq 0$. As $a=0$ yields fractional polynomial B-splines, we assume henceforth that $a > 0$.

From \cite{m14}, we immediately derive the time domain representation for a fractional exponential B-spline $E^a_\sigma$ assuming $\sigma > 1$:
\be\label{timerep}
E_a^\sigma (x) = \frac{1}{\Gamma(\sigma)}\,\sum_{k=0}^\infty \binom{\sigma}{k} (-1)^k e^{-k a} e_+^{-a(x-k)}\,(x-k)_+^{\sigma-1},
\ee
where $e_+^{(\cdot)} := \chi_{[0,\infty)}\,e^{(\cdot)}$ and $x_+ := \max\{x,0\}$. It was shown that the sum converges both point-wise in $\R$ and in the $L^2$--sense.

Next, we summarize some additional properties of exponential B-splines. 
\begin{proposition}\label{prop1}
$\abs{\widehat{E^\sigma_a}}\in \cO(\abs{\xi}^{-\sigma})$ as $\abs{\xi}\to\infty$.
\end{proposition}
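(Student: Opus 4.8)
The plan is to reduce the statement to a single magnitude estimate on the base of the power appearing in \eqref{expspline}, exploiting the fact that for a real exponent the modulus of a complex power does not depend on the chosen branch. Writing $\Xi(\xi,a) = \frac{1-e^{-(a+i\xi)}}{a+i\xi}$ as in the excerpt, I would first record that $\widehat{E^\sigma_a}(\xi) = \Xi(\xi,a)^\sigma = e^{\sigma\,\mathrm{Log}\,\Xi(\xi,a)}$ for a suitable branch of the logarithm. Since $\sigma\in\R$, taking absolute values annihilates the imaginary part of $\mathrm{Log}\,\Xi$, so that $\abs{\widehat{E^\sigma_a}(\xi)} = \abs{\Xi(\xi,a)}^\sigma$ independently of the branch. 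It therefore suffices to show $\abs{\Xi(\xi,a)}\in\cO(\abs{\xi}^{-1})$ as $\abs{\xi}\to\infty$.

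For that I would estimate numerator and denominator separately. The numerator obeys $\abs{1-e^{-(a+i\xi)}} = \abs{1 - e^{-a}e^{-i\xi}} \leq 1 + e^{-a} \leq 2$ uniformly in $\xi$, using $a>0$, while the denominator satisfies $\abs{a+i\xi} = \sqrt{a^2+\xi^2}\geq \abs{\xi}$. Combining the two gives $\abs{\Xi(\xi,a)} \leq 2/\abs{\xi}$ for all $\xi\neq 0$, and raising to the power $\sigma>0$ yields $\abs{\widehat{E^\sigma_a}(\xi)} \leq 2^\sigma\abs{\xi}^{-\sigma}$, which is precisely the asserted decay order.

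I do not expect a genuine obstacle here; the only point deserving a moment of care is the meaning of $\Xi^\sigma$ for non-integer real $\sigma$ together with the claim that its modulus equals $\abs{\Xi}^\sigma$ regardless of branch. This is justified precisely because $\Xi(\xi,a)$ never vanishes when $a>0$: the numerator has modulus bounded below by $1-e^{-a}>0$ and the denominator is nonzero, so $\Xi$ takes values in $\C\setminus\{0\}$ and $\mathrm{Log}\,\Xi$ is well-defined. As a by-product the same computation produces the matching lower bound $\abs{\Xi(\xi,a)} \geq (1-e^{-a})/\sqrt{a^2+\xi^2}$, giving in fact $\abs{\widehat{E^\sigma_a}}\asymp\abs{\xi}^{-\sigma}$; only the upper estimate is needed for the stated $\cO$-bound.
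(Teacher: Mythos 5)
Your proposal is correct and follows essentially the same route as the paper: both bound the numerator $\abs{1-e^{-(a+i\xi)}}$ by $2$ and the denominator $\abs{a+i\xi}=(a^2+\xi^2)^{1/2}$ below by $\abs{\xi}$, yielding $\abs{\widehat{E^\sigma_a}(\xi)}\leq 2^\sigma\abs{\xi}^{-\sigma}$. Your extra care about the branch of the complex power (justified since $\Xi$ is nonvanishing for $a>0$) and the matching lower bound are welcome refinements but do not change the argument.
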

\begin{proof}
This follows directly from the following chain of inequalities:
\begin{align*}
\abs{\widehat{E^\sigma_a}(\xi)} = \abs{\left(\frac{1-e^{-(a+i\xi)}}{a+i\xi}\right)^\sigma} \leq \frac{2^\sigma}{\abs{a + i \xi}^\sigma} = \frac{2^\sigma}{(a^2+\xi^2)^{\sigma/2}}\leq \frac{2^\sigma}{\abs{\xi}^\sigma},\qquad\abs{\xi} \gg 1.
\end{align*}
\end{proof}
\begin{proposition}
${E^\sigma_a}$ is in the Sobolev space $W^{s,2}(\R)$ for $s < \sigma -\frac12$.
\end{proposition}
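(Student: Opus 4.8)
The plan is to use the Fourier-analytic characterization of the fractional Sobolev space, namely that $f \in W^{s,2}(\R)$ precisely when $f \in L^2(\R)$ and the weighted integral $\int_\R (1+\xi^2)^s \abs{\wh{f}(\xi)}^2 \, d\xi$ is finite. Since the decay of $\widehat{E^\sigma_a}$ has already been pinned down in Proposition~\ref{prop1}, the entire argument reduces to checking the convergence of this one weighted $L^2$ integral against the explicit Bessel-potential weight $(1+\xi^2)^s$.

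First I would record that $\widehat{E^\sigma_a}$ is bounded and continuous on all of $\R$. This is exactly where the standing assumption $a>0$ enters: the denominator $a+i\xi$ in $\Xi(\xi,a)$ never vanishes and obeys $\abs{a+i\xi}\ge a$, while the numerator satisfies $\abs{1-e^{-(a+i\xi)}}\le 1+e^{-a}\le 2$; hence $\abs{\widehat{E^\sigma_a}(\xi)}\le (2/a)^\sigma$ uniformly in $\xi$. In particular there is no singularity at the origin, so the integrand $(1+\xi^2)^s\abs{\widehat{E^\sigma_a}(\xi)}^2$ is continuous and integrable over any bounded interval $[-R,R]$.

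Next I would split the defining integral at $\abs{\xi}=R$ for a fixed large $R$. The contribution from $\abs{\xi}\le R$ is finite by the boundedness just noted. For the tail $\abs{\xi}>R$, Proposition~\ref{prop1} supplies a constant $C$ with $\abs{\widehat{E^\sigma_a}(\xi)}^2\le C\abs{\xi}^{-2\sigma}$, while $(1+\xi^2)^s$ is comparable to $\abs{\xi}^{2s}$ for large $\abs{\xi}$. Thus the tail integrand is dominated by a constant multiple of $\abs{\xi}^{2s-2\sigma}$, and $\int_{\abs{\xi}>R}\abs{\xi}^{2s-2\sigma}\,d\xi$ converges exactly when $2s-2\sigma<-1$, i.e. when $s<\sigma-\tfrac12$. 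This is precisely the stated range; note that the case $s=0$ (i.e. $L^2$ membership) already falls inside it since $\sigma>1$, so both requirements defining $W^{s,2}$ are met simultaneously.

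I expect no serious obstacle here. The only point demanding genuine care is verifying that $\widehat{E^\sigma_a}$ has no pole at $\xi=0$ before invoking the asymptotic decay, which is guaranteed by $a>0$; the remainder is a routine one-dimensional integrability estimate. The sharpness of the exponent $\sigma-\tfrac12$ is inherited directly from the sharp decay rate $\abs{\xi}^{-\sigma}$ of Proposition~\ref{prop1} together with the integrability threshold $2s-2\sigma<-1$.
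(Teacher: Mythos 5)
Your proposal is correct and takes essentially the same route as the paper: both arguments rest on the $\cO(\abs{\xi}^{-\sigma})$ decay from Proposition~\ref{prop1} combined with the Fourier (Bessel-potential) characterization of $W^{s,2}(\R)$, with the threshold $s<\sigma-\tfrac12$ coming from the integrability condition $2s-2\sigma<-1$. The only difference is that the paper outsources this routine weighted-integral verification to a citation (the corresponding result for polynomial B-splines), whereas you carry it out explicitly, including the useful check that $a>0$ rules out any singularity of $\widehat{E^\sigma_a}$ at $\xi=0$.
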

\begin{proof}
This is implied by Proposition \ref{prop1} and the corresponding result for polynomial B-splines (cf. \cite[Section 5.1]{forster06}).
\end{proof}
\begin{proposition}\label{prop3}
${E^\sigma_a}\in C^{\lfloor\sigma\rfloor - 1}(\R)$.
\end{proposition}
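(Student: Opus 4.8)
The plan is to read the regularity straight off the time-domain representation \eqref{timerep}, exploiting the fact that on every bounded interval the series collapses to a finite sum. Writing $E_a^\sigma = \frac{1}{\Gamma(\sigma)}\sum_{k=0}^\infty c_k\,h_k$ with $c_k := \binom{\sigma}{k}(-1)^k e^{-ka}$ and $h_k(x) := e_+^{-a(x-k)}(x-k)_+^{\sigma-1}$, I would first note that $h_k(x)=0$ whenever $x<k$, since both truncations vanish there. Hence for $x$ in a fixed interval $[-R,R]$ only the indices $k=0,1,\dots,\lfloor R\rfloor$ contribute, so $E_a^\sigma$ coincides on $[-R,R]$ with a finite linear combination of the $h_k$. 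It therefore suffices to determine the smoothness of a single building block $h_k$, and the global regularity of $E_a^\sigma$ is the minimum of these over the finitely many relevant indices; this local finiteness is what removes any need to justify termwise differentiation of an infinite series.

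The heart of the argument is thus the smoothness of a truncated power. After the substitution $u=x-k$ each block becomes $\chi_{[0,\infty)}(u)\,e^{-au}u^{\sigma-1}$, which vanishes identically for $u<0$ and is $C^\infty$ for $u>0$, so only the behaviour at the knot $u=0$ is in question. I would compute the one-sided derivatives by Leibniz' rule: the $j$-th derivative is a finite sum of terms $e^{-au}$ times constants times $u^{\sigma-1-j'}$ with $j'\le j$, whose most singular contribution as $u\to 0^+$ is of order $u^{\sigma-1-j}$. This one-sided limit equals $0$, matching the left-hand value, precisely when $\sigma-1-j>0$. Consequently $h_k\in C^{m}$ for every integer $m<\sigma-1$, and the largest such integer is $\lfloor\sigma\rfloor-1$ when $\sigma\notin\N$, since $\lfloor\sigma\rfloor-1=\lfloor\sigma-1\rfloor$. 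Combining this with the local finiteness from the first step yields $E_a^\sigma\in C^{\lfloor\sigma\rfloor-1}(\R)$.

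The step I expect to require the most care is the boundary case $\sigma-1-j=0$, that is, integer orders. When $\sigma\in\N$ the term $u^{\sigma-1-j}$ with $j=\sigma-1$ is a nonzero constant for $u>0$, so the $(\sigma-1)$-th derivative jumps at the knot and the block is only $C^{\sigma-2}$; in that degenerate case the statement should be read as $C^{\sigma-2}$. The clean identity $\lfloor\sigma\rfloor-1=\lfloor\sigma-1\rfloor$ is exactly what pins down the exponent in the generic non-integer situation, while the strict inequality $j<\sigma-1$ is what places the integer orders at the threshold.

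As an alternative route that bypasses the time domain altogether, I would invoke the Fourier-side criterion. Since $\widehat{E^\sigma_a}$ is bounded and continuous (it is well defined by \eqref{expspline} for $a>0$) and Proposition \ref{prop1} gives $\abs{\widehat{E^\sigma_a}(\xi)}\lesssim(1+\abs{\xi})^{-\sigma}$, the weighted integrals $\int_\R\abs{\xi}^{\,j}\,\abs{\widehat{E^\sigma_a}(\xi)}\,d\xi$ are finite for every integer $j<\sigma-1$. Differentiating the inversion integral under the integral sign then shows $E_a^\sigma\in C^{j}$ for all such $j$, and taking $j=\lfloor\sigma\rfloor-1$ recovers the claim; equivalently, one may feed the Sobolev regularity established in the preceding proposition into the embedding $W^{s,2}(\R)\hookrightarrow C^{j}(\R)$, valid for $s>j+\tfrac12$, with $s<\sigma-\tfrac12$. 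Both routes hinge on the same strict inequality $j<\sigma-1$, which again explains why the integer orders are the delicate point.
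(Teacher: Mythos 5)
Your proposal is correct, and your primary argument takes a genuinely different route from the paper's. The paper's proof is precisely (a one-line version of) what you offer as the alternative: it combines the decay bound of Proposition \ref{prop1} with the standard criterion that $\xi\mapsto\xi^n\widehat{f}(\xi)\in L^1(\R)$ forces $f\in C^n(\R)$, observing that $\xi^n/(a^2+\xi^2)^{\sigma/2}$ is integrable exactly when $n<\sigma-1$. Your main argument instead works in the time domain: the remark that each block $e_+^{-a(x-k)}(x-k)_+^{\sigma-1}$ vanishes for $x<k$, so that \eqref{timerep} collapses to a finite sum on every bounded interval, legitimately reduces the whole question to the smoothness of one truncated power at its knot, with no need to justify termwise differentiation of an infinite series; the Leibniz computation then gives $C^m$ for every integer $m<\sigma-1$. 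What your route buys: it is elementary and self-contained, it locates the loss of regularity exactly at the knots, and it exposes the degenerate integer case --- for $\sigma=n\in\N$ the $(n-1)$-st derivative of the relevant block jumps at the knot, so the spline is only $C^{n-2}$, consistent with classical B-splines. That caveat is genuine: the proposition as stated holds only for non-integer $\sigma$, and the paper's own criterion shares the defect, since for integer $\sigma$ and $n=\sigma-1$ the function $\xi^n/(a^2+\xi^2)^{\sigma/2}$ decays like $|\xi|^{-1}$ and is not integrable (indeed the paper's ``only if'' is the reverse of the implication its proof actually needs). Both of your routes correctly hinge on the same strict inequality $j<\sigma-1$. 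The only small gloss in your time-domain argument is the standard lemma needed to pass from ``$g^{(j)}$ exists off the knot and tends to $0$ there'' to ``$g^{(j)}(0)$ exists and $g^{(j)}$ is continuous'' (a mean value theorem argument); that deserves one sentence if written out in full.
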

\begin{proof}
The function $\xi\mapsto \frac{\xi^n}{(a^2+\xi^2)^{\sigma/2}}$ is in $L^1(\R)$ only if $n \leq \lfloor\sigma\rfloor - 1$.
\end{proof}
\section{The Interpolation Problem for Fractional Exponential B-Splines}
In order to solve the cardinal spline interpolation problem using the classical Curry-Schoenberg splines \cite{chui,schoenberg}, one constructs a fundamental cardinal spline function that is a linear bi-infinite combination of polynomial B-splines $B_n$ of fixed order $n\in \N$ which interpolates the data set $\{\delta_{m,0}: m\in \Z\}$. More precisely, one looks for a solution of the bi-infinite system
\be\label{intprob}
\sum_{k\in \Z} c_k^{(n)} B_n \left(\frac{n}{2} + m - k\right) = \delta_{m,0},\quad m\in \Z,
\ee
i.e., for a sequence $\{c_k^{(n)}: k\in \Z\}$. The left-hand side of (\ref{intprob}) defines the fundamental cardinal spline $L_n:\R\to\R$ of order $n\in \N$. A formula for $L_n$ is given in terms of its Fourier transforms by
\be\label{fundspline}
\hL_n (\xi) = \frac{\left(\hB_n (\cdot + \frac{n}{2})\right)(\xi)}{\displaystyle{\sum_{k\in \Z}}\,\left(\hB_n (\cdot + \frac{n}{2})\right)(\xi + 2\pi k)}.
\ee
Using the Euler-Frobenius polynomials associated with the B-splines $B_n$, one can show that the denominator in (\ref{fundspline}) does not vanish on the unit circle $|{z}| = 1$, where ${z} = e^{-i \xi}$. For details, see \cite{chui, schoenberg}.

One of the goals in the theory of fractional exponential B-splines is to construct a {\em fundamental cardinal exponential spline $L_a^\sigma:\R\to \R$  of real order $\sigma>1$} of the form
\be\label{compint2}
L_a^\sigma := \sum_{k\in \Z} c_k^{(\sigma)} E_a^\sigma \left(\mydot - k\right),
\ee
satisfying the interpolation problem
\be\label{complexint2}
L_a^\sigma (m) = \delta_{m,0}, \quad m\in \Z, 
\ee
for an appropriate bi-infinite sequence $\{c_k^{(\sigma)} : k\in \Z\}$ and for an appropriate $\sigma$ belonging to some nonempty subset of $\R$. 

Taking the Fourier transform of (\ref{compint2}) and (\ref{complexint2}), applying the Poisson summation formula and eliminating the expression containing the unknowns $\{c_k^{(z)}: k\in \Z\}$, a formula for $L_a^\sigma$ similar to (\ref{fundspline}) is, at first, formally obtained:
\be\label{compfundspline}
\wh{L_a^\sigma} (\xi) = \frac{\wh{E_a^\sigma} (\xi)}{\displaystyle{\sum\limits_{k\in \Z}}\,\wh{E_a^\sigma} (\xi + 2\pi k)}.
\ee
Inserting \eqref{expspline} into the above expression for $\wh{L_a^\sigma}$ and simplifying yields
\[
\wh{L_a^\sigma} (\xi) = \frac{1/(\xi + i a)^\sigma}{\displaystyle{\sum_{k\in \Z}}\, \frac{1}{[\xi + 2\pi k + i a)]^\sigma}},\quad \sigma > 1.
\]
As the denominator of (\ref{compfundspline}) is $2\pi$-periodic in $\xi$, we may assume without loss of generality that $\xi\in [0,2\pi]$. Let $q:= q(a):=\frac{\xi + i a}{2\pi}$, and note that $0\leq \Re q\leq 1$ and $\Im q > 0$. The denominator in the above expression for $\wh{L_a^\sigma}$ can then - after cancelation of the $(2\pi)^\sigma$ term - be formally rewritten in the form
\begin{align}
\sum_{k\in \Z}\, \frac{1}{(q + k)^\sigma}  &= \sum_{k=0}^\infty\, \frac{1}{(q + k)^\sigma} + \sum_{k=0}^\infty\, \frac{1}{(q - (1 + k))^\sigma}
 \nonumber\\
&= \sum_{k=0}^\infty\, \frac{1}{(q + k)^\sigma} + e^{-i \pi \sigma}\sum_{k=0}^\infty\, \frac{1}{(1 - q + k)^\sigma}
\nonumber\\
&= \zeta (\sigma,q) + e^{-i \pi \sigma}\,\zeta (\sigma, 1 -q),
\label{Zerlegung in zetas}
\end{align}
where we take the principal value of the multi-valued function $e^{-i \pi (\cdot)}$ and where $\zeta(\sigma,q)$, $q\notin\Z_0^-$, denotes the generalized zeta function \cite[Section 1.10]{erdelyi} which agrees with the Hurwitz zeta function when $\Re q > 0$, the case we are dealing with here.

For $\xi = 0$, we have $\Re q = 0$ and thus
\begin{align*}
\sum_{k=0}^\infty\, \frac{1}{\abs{q + k}^\sigma} &= \sum_{k=0}^\infty\, \frac{1}{(k^2 + a^2/4\pi^2)^{\sigma/2}} \\
&\leq \left(\frac{2\pi}{a}\right)^\sigma + \int_0^1 \frac{dx}{(x^2 + a^2/4\pi^2)^{\sigma/2}} + \int_1^\infty \frac{dx}{(x^2 + a^2/4\pi^2)^{\sigma/2}} < \infty,
\end{align*}
as $\sigma > 1$. The last integral above evaluates to 
\[
\left(\frac{a}{2\pi}\right)^{-\sigma}\left[\left(\frac{a}{4\pi}\right)\,B\left(\frac12,\frac{\sigma-1}{2}\right) - \,_2F_1\left(\frac{1}{2},\frac{\sigma }{2};\frac{3}{2};-\frac{4 \pi^2}{a^2}\right)\right],
\]
where $B$ and $_2F_1$ denote the Beta and Gauss's hypergeometric function, respectively. (See, e.g., \cite{GR}.)

Replacing in the above expression $k$ by $k+1$, one shows in a similar fashion that $\sum\limits_{k=0}^\infty\, \frac{1}{(1 - q + k)^\sigma}$ also converges absolutely. Hence, $\wh{L_a^\sigma}$ is defined and finite at $\xi = 0$.

For $\xi = 2\pi$, $q$ and $1-q$ are interchanged and we immediately obtain from the above arguments that $\wh{L_a^\sigma}$ is defined and finite at $\xi = 2\pi$. Thus, it suffices to consider $0 < \Re q < 1$.

Next, we show that the denominator in (\ref{compfundspline}) does not vanish, i.e., that $L_a^\sigma$ is well-defined for appropriately chosen $\sigma$. To this end, it suffices to find conditions on $\sigma$ such that the function
$$
Z(\sigma, q) :=\zeta (\sigma,q) + e^{-i \pi \sigma}\,\zeta (\sigma, 1 - q)
$$
has no zeros for all $\Re q\in (0,1)$ and a fixed $a >0$. 

We require the following lemma which is based on a result in \cite{spira} for the case of real $q$.

\begin{lemma}\label{lem1}
Let $q = u + i v$ where $0 < u := \frac{\xi}{2\pi} < 1$ and $v := \frac{a}{2\pi} > 0$. If \[
\sigma > \sigma_0 := \tfrac12+\sqrt{2}\sqrt{1+v^2+v^4},
\] 
then $\zeta (\sigma, q) \neq 0$.
\end{lemma}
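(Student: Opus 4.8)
The plan is to establish nonvanishing by showing that the leading term of the defining series dominates the remainder, adapting Spira's real-variable argument to the complex parameter $q = u + iv$. Writing the Hurwitz zeta function as
\[
\zeta(\sigma,q) = q^{-\sigma} + \sum_{k=1}^\infty (q+k)^{-\sigma},
\]
the triangle inequality reduces the claim to the strict inequality
\[
|q|^{-\sigma} > \sum_{k=1}^\infty |q+k|^{-\sigma} = \sum_{k=1}^\infty \bigl((u+k)^2 + v^2\bigr)^{-\sigma/2},
\]
equivalently $\sum_{k=1}^\infty (|q|/|q+k|)^\sigma < 1$, where $|q|^2 = u^2 + v^2$ and $|q+k|^2 = (u+k)^2 + v^2$. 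Since $\sigma > 1$ and $u > 0$, the map $t \mapsto ((u+t)^2 + v^2)^{-\sigma/2}$ is positive and strictly decreasing on $[0,\infty)$, which is precisely what will license the integral comparisons below.

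First I would bound the tail. The naive integral test $\sum_{k\geq 1} g(k) \leq \int_0^\infty g(t)\,dt$ is, by itself, too crude here: it discards a quantity of size $\tfrac12 g(0) = \tfrac12 |q|^{-\sigma}$, which is of the same order as the very term $|q|^{-\sigma}$ we are trying to dominate, and one checks directly (e.g.\ at $u \to 0$ with $v$ large) that it does not reach the stated threshold. I would therefore peel off the first tail term and compare only the genuine tail to an integral, using monotonicity in the form $\sum_{k\geq 2} g(k) \leq \int_1^\infty g(t)\,dt$, so that
\[
\sum_{k=1}^\infty |q+k|^{-\sigma} \leq |q+1|^{-\sigma} + \int_{u+1}^\infty (s^2 + v^2)^{-\sigma/2}\,ds.
\]
A single integration by parts bounds the remaining integral by a closed-form multiple of $((u+1)^2+v^2)^{1-\sigma/2}$ carrying an overall factor $(\sigma - 2)^{-1}$.

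Dividing through by $|q|^{-\sigma}$ turns the target into an inequality of the schematic form
\[
\left(\frac{u^2+v^2}{(u+1)^2+v^2}\right)^{\sigma/2}\left[\,1 + \frac{(u+1)^2+v^2}{(\sigma-2)(u+1)}\,\right] < 1.
\]
To convert this into an algebraic condition on $\sigma$, I would relax the power $(|q|/|q+1|)^\sigma$ to a rational expression via the convexity (Bernoulli-type) inequality $(1+x)^{\sigma/2} \geq 1 + \tfrac{\sigma}{2}x$, applied with $x = (|q+1|^2 - |q|^2)/|q|^2 = (2u+1)/(u^2+v^2)$ and $\sigma \geq 2$. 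After clearing denominators this yields a polynomial inequality in $\sigma$; optimizing the resulting $u$-dependent data over $u \in (0,1)$ removes the dependence on $u$ and collapses the condition to a quadratic of the form $\sigma(\sigma-1) > \tfrac74 + 2v^2 + 2v^4$, whose positive root is exactly $\sigma_0 = \tfrac12 + \sqrt{2}\sqrt{1+v^2+v^4}$. Monotonicity of both sides in $\sigma$ then delivers the conclusion for every $\sigma > \sigma_0$.

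The main obstacle is the tightness of the estimate rather than any isolated step: the desired threshold lies strictly below what the plain integral test can furnish, so the first-term peeling and the rational relaxation of the exponential factor must be sharp enough to remain sufficient, yet clean enough to collapse to the stated closed-form quadratic. A further delicate point is the optimization over $u \in (0,1)$: the extremal configuration migrates between the endpoints $u \to 0$ and $u \to 1$ as $v$ varies, so one must either bound the relevant $u$-dependent quantity below uniformly or verify the worst case carefully, and simultaneously confirm that all comparisons used (monotonicity for the integral test, $\sigma > 2$ for the integration by parts and for Bernoulli's inequality) remain valid throughout the relevant range of $\sigma$.
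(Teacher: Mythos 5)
Your overall architecture is the same as the paper's: split off $q^{-\sigma}$, apply the triangle inequality, peel off the $k=1$ term, compare the remaining tail with $\int_1^\infty[(u+x)^2+v^2]^{-\sigma/2}\,dx$, relax the resulting power inequality via Bernoulli, and finish with a quadratic in $\sigma$. The genuine gap is in the one step you treat as routine: the bound on the tail integral. Your integration by parts (equivalently, inserting $s/(u+1)\geq 1$ into the integrand) gives
\[
\int_{u+1}^\infty (s^2+v^2)^{-\sigma/2}\,ds \;\le\; \frac{\left[(u+1)^2+v^2\right]^{1-\sigma/2}}{(\sigma-2)(u+1)},
\]
with the factor $(\sigma-2)^{-1}$. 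Feeding this through your own Bernoulli step produces the sufficient condition $\sigma(\sigma-2) > \frac{2(u^2+v^2)[(u+1)^2+v^2]}{(1+u)(1+2u)}$ --- note $\sigma(\sigma-2)$, not $\sigma(\sigma-1)$. Since the right-hand side tends to $2v^2+2v^4$ as $u\to 0^+$, the best threshold this route can yield is at least $1+\sqrt{1+2v^2+2v^4}$, which is strictly larger than $\sigma_0=\tfrac12+\sqrt{2+2v^2+2v^4}$ for every $v$ (indeed $1+\sqrt{1+w}>\tfrac12+\sqrt{2+w}$ for all $w\ge 0$, because $\sqrt{2+w}+\sqrt{1+w}>2$). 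So your claimed ``collapse'' to $\sigma(\sigma-1)>\tfrac74+2v^2+2v^4$ does not follow from your own estimates; that is precisely where the proof breaks. Worse, for small $v$ one has $\sigma_0<2$ (e.g.\ $\sigma_0\to\tfrac12+\sqrt2\approx 1.914$ as $v\to 0$), so the lemma asserts nonvanishing for some $\sigma\le 2$, a range in which a bound carrying $(\sigma-2)^{-1}$ is vacuous.

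The missing ingredient is the paper's sharper treatment of the integral: the convex function $x\mapsto\sqrt{(u+x)^2+v^2}$ lies above its tangent line at $x=1$,
\[
\sqrt{(u+x)^2+v^2} \;\ge\; \sqrt{(u+1)^2+v^2} + \frac{(1+u)(x-1)}{\sqrt{(u+1)^2+v^2}},
\]
which the paper verifies directly by showing the difference of the squares equals $\frac{(x-1)^2 v^2}{(u+1)^2+v^2}\ge 0$. Integrating the $-\sigma$ power of this \emph{linear} minorant exactly gives
\[
\int_1^\infty \frac{dx}{\left[(u+x)^2+v^2\right]^{\sigma/2}} \;<\; \frac{\left[(u+1)^2+v^2\right]^{1-\sigma/2}}{(\sigma-1)(u+1)},
\]
i.e.\ the factor $(\sigma-1)^{-1}$ in place of your $(\sigma-2)^{-1}$. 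With this, Bernoulli yields $\sigma(\sigma-1)>\frac{2(u^2+v^2)[(u+1)^2+v^2]}{(1+u)(1+2u)}$, and the paper's polynomial division together with $0<u<1$ bounds the right-hand side by a quantity whose associated root is exactly $\sigma_0$. Everything else in your outline (first-term peeling, Bernoulli relaxation, handling of the $u$-dependence) tracks the paper; the tangent-line minorization is the single idea you are missing, and without it the stated threshold $\sigma_0$ is unreachable.
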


\begin{proof}
We have that
\begin{align*}
\abs{\zeta(\sigma, q)} & \geq \frac{1}{\abs{q}^\sigma} - \sum_{k\geq 1}\frac{1}{\abs{k+q}^\sigma} \\
& >  \frac{1}{\abs{q}^\sigma} - \frac{1}{\abs{q+1}^\sigma} - \int_1^\infty \frac{dx}{\big[(u+x)^2 + v^2\big]^{\sigma/2}}.
\end{align*}
Now, for $x\geq 1$, $0<u<1$ and $v>0$, 
\[
\sqrt{(u+x)^2 + v^2} \geq \sqrt{(u+1)^2 + v^2} + \frac{(1+u)(x-1)}{\sqrt{(u+1)^2 + v^2}}
\]
as can be shown by direct computation:
\[
(u+x)^2 + v^2 - \left(\sqrt{(u+1)^2 + v^2} + \frac{(1+u)(x-1)}{\sqrt{(u+1)^2 + v^2}}\right)^2 = \frac{(x-1)^2 v^2}{(u+1)^2 + v^2}\geq 0.
\]
The above inequality shows that
\begin{align*}
\int_1^\infty \frac{dx}{\big[(u+x)^2 + v^2\big]^{\sigma/2}} & < \int_1^\infty \frac{dx}{\left(\sqrt{(u+1)^2 + v^2} + \frac{(1+u)(x-1)}{\sqrt{(u+1)^2 + v^2}}\right)^\sigma}\\
& = \frac{\left[(u+1)^2+v^2\right]^{1-\sigma/2}}{(\sigma -1 )(u+1)}.
\end{align*}
Therefore, 
\[
\abs{\zeta(\sigma, q)} > \frac{1}{\abs{q}^\sigma} - \frac{1}{|q+1|^\sigma} - \frac{\left[(u+1)^2+v^2\right]^{1-\sigma/2}}{(\sigma -1 )(u+1)},
\]
and the right-hand side of this inequality is strictly positive if
\be\label{1}
\frac{1}{\abs{q}^\sigma} > \frac{1}{|q+1|^\sigma} + \frac{\left[(u+1)^2+v^2\right]^{1-\sigma/2}}{(\sigma -1 )(u+1)}.
\ee

Replacing $q$ by $u + i v$ and simplifying shows that inequality \eqref{1} is equivalent to
\be\label{12}
\left(1+\frac{2u+1}{u^2+v^2}\right)^{\sigma/2} > 1 + \frac{(u+1)^2+v^2}{(\sigma -1 )(u+1)}.
\ee
Employing the Bernoulli inequality to the expression on the left-hand side of \eqref{12}, yields
\[
\left(1+\frac{2u+1}{u^2+v^2}\right)^{\sigma/2}  \geq 1 + \frac{\sigma}{2}\frac{2u+1}{u^2+v^2},
\]
which implies that \eqref{12} holds if
\[
\frac{\sigma}{2}\cdot\frac{2u+1}{u^2+v^2} > \frac{(u+1)^2+v^2}{(\sigma -1 )(u+1)},
\]
or, equivalently,
\[
\sigma\, (\sigma -1) > \frac{2(u^2+v^2)[(u+1)^2+v^2]}{(1+u)(1+2u)}.
\]
Performing the polynomial division on the right-hand side of the above inequality produces
\[
\sigma\, (\sigma -1) > 2v^2+u^2+\tfrac12 u - \tfrac14+\frac{\frac14+2v^4+\frac14 u -2 u v^2}{(1+u)(1+2u)}
\]
and this inequality holds if 
\be\label{3}
\sigma\, (\sigma -1) > 2v^2 + 2v^4 -\tfrac74,
\ee
where we used the fact that $0<u<1$.

Thus, inequality \eqref{3} holds if 
\[
\sigma > \sigma_0 := \tfrac12+\sqrt{2}\sqrt{1+v^2+v^4}.\qedhere
\]
\end{proof}

\begin{theorem}
The function $Z(\sigma, q) = \zeta (\sigma,q) + e^{-i \pi \sigma}\,\zeta (\sigma, 1 - q)$ with $q = \frac{1}{2\pi}(\xi + i\,{a})$ has no zeros provided 
\be\label{sigma}
\sigma \geq \sigma_0 = \tfrac12+\sqrt{2}\sqrt{1+\frac{a^2}{4\pi^2}+\frac{a^4}{16\pi^4}}
\ee
and
\be\label{2}
\frac{\pi}{2}(\sigma -1) + \Arg \left(\zeta (\sigma, \tfrac12 -i\,\tfrac{a}{2\pi})\right) \notin \pi\N.
\ee
\end{theorem}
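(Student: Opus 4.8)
The plan is to rule out $Z(\sigma,q)=0$ by splitting the vanishing into a modulus obstruction, governed by \eqref{sigma}, and an argument obstruction, governed by \eqref{2}.

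First I would record that both Hurwitz zeta values occurring in $Z$ are nonzero. Lemma \ref{lem1} gives $\zeta(\sigma,q)\neq0$ for every $q=u+iv$ with $0<u<1$ under \eqref{sigma}; applying the lemma to $(1-u)+iv$ and invoking the reflection $\zeta(\sigma,\overline w)=\overline{\zeta(\sigma,w)}$ (valid for real $\sigma$ and $\Re w>0$) shows $\zeta(\sigma,1-q)=\overline{\zeta(\sigma,(1-u)+iv)}\neq0$ as well. Hence
\[
Z(\sigma,q)=\zeta(\sigma,q)\left(1+e^{-i\pi\sigma}\,\frac{\zeta(\sigma,1-q)}{\zeta(\sigma,q)}\right),
\]
and $Z(\sigma,q)=0$ forces both $\abs{\zeta(\sigma,q)}=\abs{\zeta(\sigma,1-q)}$ and a matching condition on arguments.

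Next I would use the reflection $\xi\mapsto2\pi-\xi$, i.e.\ $u\mapsto1-u$. A short computation with the identity above gives $Z(\sigma,(1-u)+iv)=e^{-i\pi\sigma}\,\overline{Z(\sigma,u+iv)}$, so $\abs{Z}$ is symmetric about $u=\tfrac12$ and it suffices to treat $0<u\leq\tfrac12$. Writing $g(u):=\abs{\zeta(\sigma,u+iv)}$, the reflection identity also yields $\abs{\zeta(\sigma,1-q)}=g(1-u)$, so the modulus condition above becomes $g(u)=g(1-u)$.

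The main step, and the step I expect to be the principal obstacle, is to show that $g$ is strictly monotone on $(0,1)$, so that $g(u)=g(1-u)$ can only hold at $u=\tfrac12$. Differentiating $g(u)^2=\zeta(\sigma,u+iv)\,\zeta(\sigma,u-iv)$ and using $\partial_u\zeta(\sigma,u+iv)=-\sigma\,\zeta(\sigma+1,u+iv)$ gives $\partial_u\big(g(u)^2\big)=-2\sigma\,\Re\big(\zeta(\sigma+1,q)\,\overline{\zeta(\sigma,q)}\big)$, so monotonicity reduces to the strict positivity of $\Re\big(\zeta(\sigma+1,q)\,\overline{\zeta(\sigma,q)}\big)$ for $0<u<1$, $v>0$. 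The leading terms of the two series contribute $\Re\big(q^{-\sigma-1}\,\overline q^{-\sigma}\big)=u\,\abs{q}^{-2\sigma-2}>0$, and I would estimate the tails $\sum_{k\geq1}(k+q)^{-\sigma-1}$ and $\sum_{k\geq1}(k+q)^{-\sigma}$ by the same integral-comparison device used in the proof of Lemma \ref{lem1}, so that the bound \eqref{sigma} forces the positive leading contribution to dominate. Making this domination uniform in $u$ — in particular as $u\to\tfrac12$, where the naive triangle-inequality estimates degenerate — is the delicate point.

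Finally, on the remaining slice $u=\tfrac12$ one has $q=\tfrac12+iv$ and $1-q=\overline q$, so with $\rho:=\abs{\zeta(\sigma,\tfrac12+iv)}>0$ and $\psi:=\Arg\zeta(\sigma,\tfrac12-i\tfrac{a}{2\pi})$ the reflection gives $\zeta(\sigma,q)=\rho\,e^{-i\psi}$ and $\zeta(\sigma,1-q)=\rho\,e^{i\psi}$, whence
\[
Z(\sigma,q)=2\rho\,e^{-i\pi\sigma/2}\,\cos\!\big(\tfrac{\pi\sigma}{2}-\psi\big).
\]
Since $\rho>0$, this vanishes exactly when $\tfrac{\pi}{2}(\sigma-1)-\psi$ is an integer multiple of $\pi$, which is precisely the degeneracy excluded by \eqref{2}. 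Combining the two cases gives $Z(\sigma,q)\neq0$ throughout $0<u<1$, as required.
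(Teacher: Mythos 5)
Your proposal has the same skeleton as the paper's proof: Lemma \ref{lem1} (applied to $q$ and, via conjugation, to $1-q$) gives nonvanishing of the two Hurwitz zeta factors; zeros off the line $\Re q=\tfrac12$ are to be excluded by a modulus comparison; on the line $\Re q=\tfrac12$ the zeros are pinned down by an argument condition. Your treatment of the line $\Re q=\tfrac12$ is correct and is exactly the paper's Case III in a cleaner form: $Z(\sigma,q^*)=2\rho\,e^{-i\pi\sigma/2}\cos\bigl(\tfrac{\pi\sigma}{2}-\psi\bigr)$. One caveat: the vanishing condition you derive, $\tfrac{\pi}{2}(\sigma-1)-\psi\in\pi\Z$ with $\psi=\Arg\zeta(\sigma,\tfrac12-i\tfrac{a}{2\pi})$, is not literally \eqref{2}, which has $+\psi$ and $\pi\N$; it does, however, agree with what the paper's own proof and worked example produce (both use $\Arg\zeta(\sigma,\tfrac12+i\tfrac{a}{2\pi})=-\psi$), so this sign discrepancy is an inconsistency internal to the paper, not an error of yours. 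Your reflection identity $Z(\sigma,(1-u)+iv)=e^{-i\pi\sigma}\,\overline{Z(\sigma,u+iv)}$ is also correct, though ultimately not needed.

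The genuine gap is the case $\Re q\neq\tfrac12$, which is the heart of the theorem. You correctly reduce strict monotonicity of $g(u)=\abs{\zeta(\sigma,u+iv)}$ to the positivity of $\Re\bigl(\zeta(\sigma+1,q)\,\overline{\zeta(\sigma,q)}\bigr)$, but you never prove that positivity, and the sketch you give cannot be completed as described. Moreover, the breakdown is not where you place it: pointwise positivity of the derivative on $(0,1)$ already gives strict monotonicity, so no uniformity near $u=\tfrac12$ is needed; the real obstruction is $u\to0^+$. There your positive leading term $u\,\abs{q}^{-2\sigma-2}$ vanishes linearly, while cross terms of the double series $\sum_{j,k}\Re\bigl((j+q)^{-\sigma-1}\,\overline{(k+q)}^{-\sigma}\bigr)$ do not: the $j=0$ terms have argument $-(\sigma+1)\arg q+\sigma\arg(k+q)\approx-(\sigma+1)\tfrac{\pi}{2}$ for small $u$, hence can be negative with modulus $\abs{q}^{-\sigma-1}\abs{k+q}^{-\sigma}$, which overwhelms $u\,\abs{q}^{-2\sigma-2}$ as $u\to0$. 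So leading-term domination under \eqref{sigma} alone is false, and the monotonicity claim remains open. The paper instead disposes of this case with a purely termwise comparison: for $0<\Re q<\tfrac12$ one has $\abs{k+q}<\abs{k+1-q}$ for every $k\in\N_0$, from which it concludes $\abs{\zeta(\sigma,1-q)}<\abs{\zeta(\sigma,q)}$ and hence $\abs{Z}\geq\abs{\,\abs{\zeta(\sigma,q)}-\abs{\zeta(\sigma,1-q)}\,}>0$, with no derivatives of $\zeta$ involved. (Your instinct that triangle-inequality estimates are delicate here is sound: the paper's chain silently identifies $\sum_k\abs{k+q}^{-\sigma}$ with $\abs{\zeta(\sigma,q)}$, which is only an upper bound for the latter; a tight repair combines the upper bound $\abs{\zeta(\sigma,1-q)}\leq\sum_{k\geq0}\abs{k+1-q}^{-\sigma}$ with the lower bound $\abs{\zeta(\sigma,q)}\geq\abs{q}^{-\sigma}-\sum_{k\geq1}\abs{k+q}^{-\sigma}$ already used in the proof of Lemma \ref{lem1}, possibly at the cost of enlarging $\sigma_0$. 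That elementary route, not monotonicity in $u$, is the natural way to close your gap.)
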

\begin{proof}
For a given $q$, we consider three cases: (I) $0 < \re q < \frac12$, (II) $\frac12 < \re q < 1$, and (III) $\re q = \frac12$.

To this end, fix $a > 0$ and choose $\sigma > \sigma_0$. Note that the above argument employed to derive $\sigma_0$ also applies to the case of $q$ being replaced by$1-q = 1-u - iv$ and yields the same value. Thus, by Lemma \ref{lem1}, $\zeta(\sigma, q) \neq 0$ and $\zeta(\sigma, 1-q) \neq 0$.
\nl
Case I: If $0 < \re q < \frac12$ then $\abs{k+q} < \abs{k+1-q}$, for all $k\in \N_0$. Therefore,
\begin{align*}
\abs{e^{-i\,\pi\sigma}\,\zeta(\sigma, 1 -q)} & < \sum_{k=0}^\infty \frac{1}{\abs{k+q}^\sigma} = \abs{\zeta(\sigma, q)}.
\end{align*}
Similarly, one obtains in Case II with $\frac12 < \re q < 1$ that
\begin{align*}
\abs{\zeta(\sigma, q)} & < \abs{e^{-i\,\pi\sigma}\,\zeta(\sigma, 1 -q)}.
\end{align*}
Hence, $\abs{Z(\sigma, q)} \geq \abs{\abs{\zeta(\sigma, q)}-\abs{\zeta(\sigma, 1-q)}} > 0$, for $\re q \neq \frac12$.

In Case III with $\re q = \frac12$ and $\sigma$ satisfying \eqref{sigma}, we set $q^* := \frac12  + i\,\frac{a}{2\pi}$ and observe that 
\[
\zeta(\sigma, q^*) = \sum\limits_{k=0}^\infty \frac{1}{(k+\frac12 + i\,\frac{a}{2\pi})^\sigma}
\]
and 
\[
\zeta(\sigma, 1- q^*) = \sum\limits_{k=0}^\infty \frac{1}{(k+\frac12 - i\,\frac{a}{2\pi})^\sigma}.
\]
Hence, $\zeta(\sigma, 1- q^*) = \overline{\zeta(\sigma, q^*)}$ and therefore
\[
Z(\sigma, q^*) = \zeta(\sigma, q^*) + e^{-i\,\pi\sigma} \overline{\zeta(\sigma, q^*)} = 
\zeta(\sigma, q^*) \left(1 + e^{-i\,\pi\sigma}\, \frac{\overline{\zeta(\sigma, q^*)}}{ \zeta(\sigma, q^*)}\right).
\]
Setting for simplicity $\zeta^* := \zeta(\sigma, q^*)$, the expression in parentheses becomes zero if
\[
1 + e^{-i\,\pi\sigma}\,\left( \frac{\overline{\zeta^*}}{ \zeta^*} \right)= 0
\]
or, equivalently, as $\frac{\overline{\zeta^*}}{ \zeta^*} = \exp (-2 i \arg \zeta^*)$,
\[
\exp(-i\,\pi\sigma -2 i \arg \zeta^*) = \exp(i\,\arg (-1)).
\]
Using the principal values of $\arg$, $\Arg$, this latter equation can be rewritten as
\[
\frac{\pi}{2}\,\sigma + \Arg(\zeta^*) = (2m + 1)\,\frac{\pi}{2}, \quad m\in \Z.
\]
Note that $\sigma \geq 1 + \sqrt{1 + \frac{a^2}{4\pi^2}} > 2$ and that $\Arg(\zeta^*)\in (-\pi, \pi]$ (taking the negative real axis as a branch cut) and therefore, we need to impose condition \eqref{2} to ensure that $Z(\sigma, q^*) \neq 0$ as $\zeta(\sigma, q^*) \neq 0$.
\end{proof}

\begin{remark}
As $-\pi < \Arg z \leq\pi$, condition \eqref{2} can for fixed $a>0$ have at most one solution. 
\end{remark}

\begin{remark}
Note that if $q\in \R$, i.e., $ a = 0$, we obtain the conditions derived in \cite{FM} for polynomial B-splines of fractional order. 
\end{remark}

\begin{definition}
We call real orders $\sigma$ that fulfill conditions \eqref{sigma} and \eqref{2} for a fixed $a > 0$ admissible.
\end{definition}

\begin{example}
Let $\sigma := \sqrt{6}$ and $a:=2$. Hence, $\sqrt{6} > \sigma_0 \approx 1.99103$ and condition \eqref{sigma} holds. A numerical evaluation of $\zeta( \sqrt{6}, \frac12+\frac{i}{\pi})$ using Mathematica's HurwitzZeta function produces the value $\zeta^* = 1.19269 - i\,3.76542$. The principal value of $\arg\zeta^*$ is therefore $\Arg\zeta^* = -1.26405$. As $\frac{\pi}{2}(\sigma -1) + \Arg\zeta^* = 1.01281 \notin \pi\N$, the second condition \eqref{2} is also satisfied. Thus, $\sigma := \sqrt{6}$ is an admissible real order. By the continuous dependence of conditions \eqref{sigma} and \eqref{2} on $\sigma$, there exist therefore uncountably many admissible $\sigma$.
\end{example}

\begin{example}
Selecting $a := 2$, the value $\sigma \approx 4.68126$ is not admissible as condition \eqref{sigma} is satisfied but the left-hand side of \eqref{2} yields the value $\pi$. (See Figure \ref{fig0} below.)
\begin{figure}[h!]
\begin{center}
\includegraphics[width=5cm, height= 3cm]{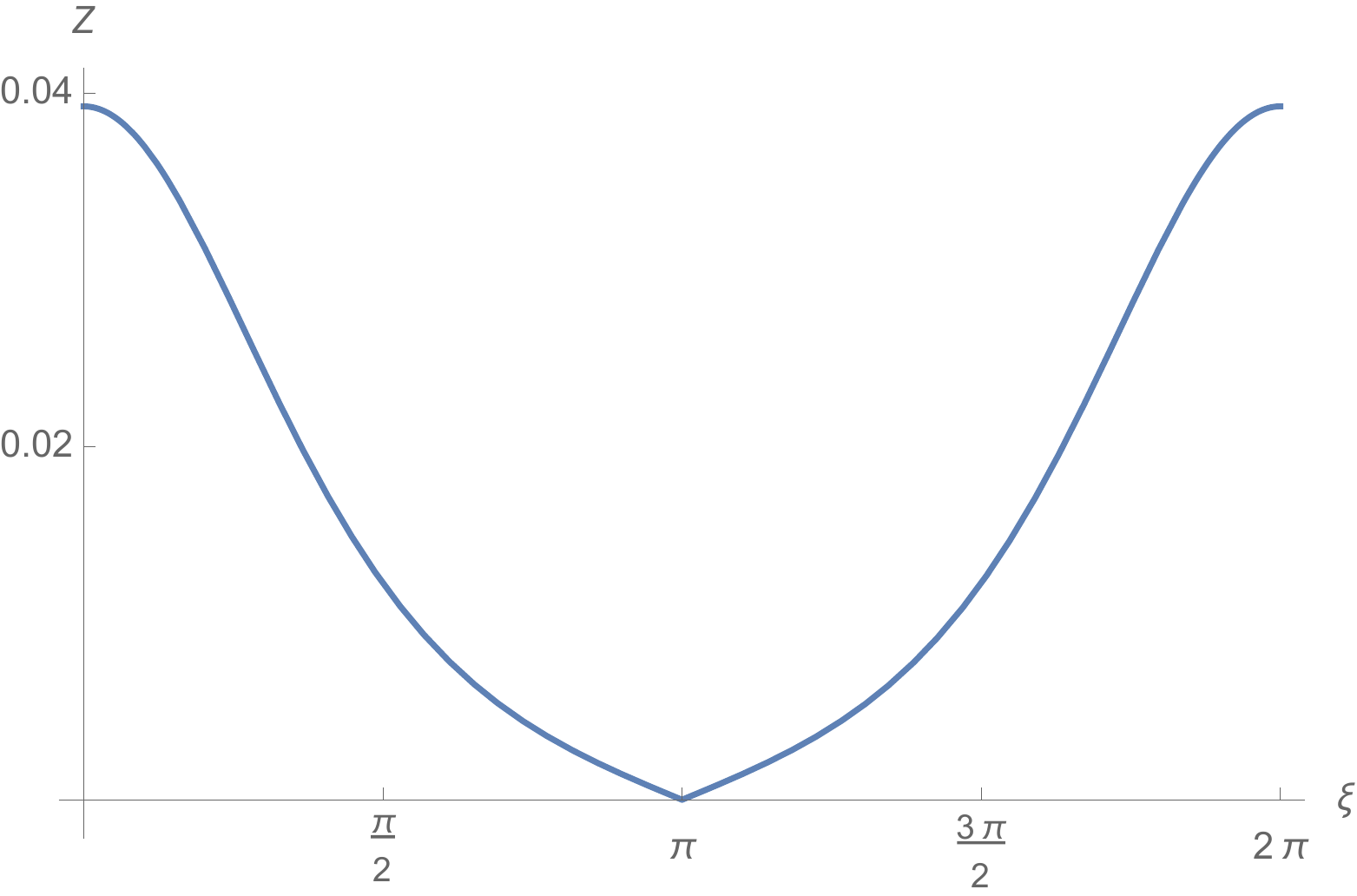}
\caption{Example of a non-admissible $\sigma$.}\label{fig0}
\end{center}
\end{figure}
\end{example}

We finally arrive at one of the main results.

\begin{theorem}
Suppose that ${E}_{a}^\sigma$ is an exponential B-spline of admissible real order $\sigma$.
Then
\be\label{L}
{L}_a^\sigma (x) := \frac{1}{2\pi}\,\int\limits_{\R} \frac{((\xi + i a)/2\pi)^{-\sigma}\,e^{i \xi x}\,d\xi}{\zeta (\sigma, (\xi + i\,a)/2\pi) + e^{-i \pi \sigma}\zeta (\sigma, 1 - (\xi + i\,a)/2\pi)}
\ee
is a fundamental exponential interpolating spline of real order $\sigma$ in the sense that
\[
{L}_a^\sigma ({m}) = \delta_{{m},0}, \quad \mbox{for all }{m}\in \Z.
\]
The Fourier inverse in \eqref{L} holds in the $L^1$ and $L^2$ sense.
\end{theorem}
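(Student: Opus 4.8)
The function defined by \eqref{L} is exactly the inverse Fourier transform, with the convention $\wh{g}(\xi)=\int_\R g(x)\,e^{-i\xi x}\,dx$ and $g(x)=\frac{1}{2\pi}\int_\R \wh{g}(\xi)\,e^{i\xi x}\,d\xi$, of the function
\[
\wh{L_a^\sigma}(\xi)=\frac{\wh{E_a^\sigma}(\xi)}{D(\xi)},\qquad D(\xi):=\sum_{k\in\Z}\wh{E_a^\sigma}(\xi+2\pi k),
\]
which, after the simplification preceding \eqref{compfundspline} and the rewriting \eqref{Zerlegung in zetas}, equals $((\xi+ia)/2\pi)^{-\sigma}/Z(\sigma,q)$ with $q=(\xi+ia)/2\pi$. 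Accordingly I would reduce the theorem to two assertions: that $D$ is bounded away from $0$, and that the $2\pi$-periodization of $\wh{L_a^\sigma}$ is identically $1$.

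First I would settle integrability, which also yields the last sentence of the statement. The series $D$ is $2\pi$-periodic, since replacing $\xi$ by $\xi+2\pi$ merely reindexes it; it is continuous, being the locally uniform limit of the absolutely convergent series whose convergence was established above through the Hurwitz-zeta estimates. Because $a>0$ forces $\abs{e^{-(a+i\xi)}}=e^{-a}<1$, the common factor $1-e^{-(a+i\xi)}$ never vanishes, so the preceding theorem (nonvanishing of $Z$ under \eqref{sigma} and \eqref{2}) gives $D(\xi)\neq0$ for every $\xi$. A continuous, nonvanishing, $2\pi$-periodic function has a positive minimum modulus on the compact period $[0,2\pi]$, whence $\abs{D(\xi)}\geq c>0$ for all $\xi$. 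Combined with Proposition \ref{prop1} this yields $\abs{\wh{L_a^\sigma}(\xi)}\leq c^{-1}\abs{\wh{E_a^\sigma}(\xi)}\leq C\abs{\xi}^{-\sigma}$ for $\abs{\xi}\gg1$; since $\wh{L_a^\sigma}$ is continuous and $\sigma>1$, it lies in $L^1(\R)\cap L^2(\R)$. Therefore the integral in \eqref{L} converges absolutely and represents a continuous bounded function, and by Plancherel it agrees with the $L^2$-inverse transform.

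For the interpolation identity I would first record the periodization
\[
\sum_{k\in\Z}\wh{L_a^\sigma}(\xi+2\pi k)=\sum_{k\in\Z}\frac{\wh{E_a^\sigma}(\xi+2\pi k)}{D(\xi+2\pi k)}=\frac{1}{D(\xi)}\sum_{k\in\Z}\wh{E_a^\sigma}(\xi+2\pi k)=\frac{D(\xi)}{D(\xi)}=1,
\]
where the middle step uses the $2\pi$-periodicity $D(\xi+2\pi k)=D(\xi)$ and the absolute convergence supplied by $\abs{\wh{L_a^\sigma}(\xi+2\pi k)}\leq C\abs{k}^{-\sigma}$. I would then evaluate the samples by splitting $\int_\R=\sum_{k\in\Z}\int_{2\pi k}^{2\pi(k+1)}$, substituting $\xi=\eta+2\pi k$ with $\eta\in[0,2\pi]$, and using $e^{2\pi i km}=1$ for $m,k\in\Z$:
\[
L_a^\sigma(m)=\frac{1}{2\pi}\int_0^{2\pi}\Bigl(\sum_{k\in\Z}\wh{L_a^\sigma}(\eta+2\pi k)\Bigr)e^{i\eta m}\,d\eta=\frac{1}{2\pi}\int_0^{2\pi}e^{i\eta m}\,d\eta=\delta_{m,0}.
\]
The interchange of sum and integral is justified by $\sum_{k}\int_0^{2\pi}\abs{\wh{L_a^\sigma}(\eta+2\pi k)}\,d\eta=\|\wh{L_a^\sigma}\|_{L^1(\R)}<\infty$ via Fubini--Tonelli.

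I expect the main obstacle to be bookkeeping rather than analysis: the substantive nonvanishing has already been secured by the preceding theorem, so the delicate points here are upgrading pointwise nonvanishing of $D$ to the uniform bound $\abs{D}\geq c>0$, and verifying that the branch conventions adopted in \eqref{Zerlegung in zetas} make the telescoping $\sum_k\wh{L_a^\sigma}(\xi+2\pi k)=1$ exact rather than merely formal. Once the uniform lower bound and the resulting membership $\wh{L_a^\sigma}\in L^1(\R)$ are in hand, both the convergence claims and the sampling computation reduce to Fubini.
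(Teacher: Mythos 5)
Your proof is correct and takes exactly the route the paper intends: the paper states this theorem with no explicit proof, treating it as a direct consequence of the preceding nonvanishing theorem for $Z(\sigma,q)$ under \eqref{sigma}--\eqref{2} together with the decay estimate of Proposition \ref{prop1}, which is precisely what you assemble (uniform lower bound on the periodized denominator via periodicity, continuity and compactness; $L^1\cap L^2$ membership of $\wh{L_a^\sigma}$ from the $\cO(\abs{\xi}^{-\sigma})$ decay with $\sigma>1$; and the periodization/Fubini computation giving $L_a^\sigma(m)=\delta_{m,0}$). The only delicate points — the branch conventions making \eqref{Zerlegung in zetas} exact, and nonvanishing of the denominator at the period endpoints $\Re q\in\{0,1\}$ — are ones you flag yourself, and the paper glosses over them as well.
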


Let
\[
h(\xi, a, \sigma) := \frac{((\xi + i a)/2\pi)^{-\sigma}}{\zeta (\sigma, (\xi + i\,a)/2\pi) + e^{-i \pi \sigma}\zeta (\sigma, 1 - (\xi + i\,a)/2\pi)}.
\]
\nl
The following figures show $\abs{h}$, $\Re h$, and $\Im h$ as functions of fixed 
$a := 2$ and varying $\sigma\in \{2.5, 2.75, 3, 3.5\}$, and for fixed $\sigma:= \sqrt{6}$ and varying $a\in \{2, 3, 4, 5\}$.

\begin{figure}[h!]
\begin{center}
\includegraphics[width=6cm, height= 4cm]{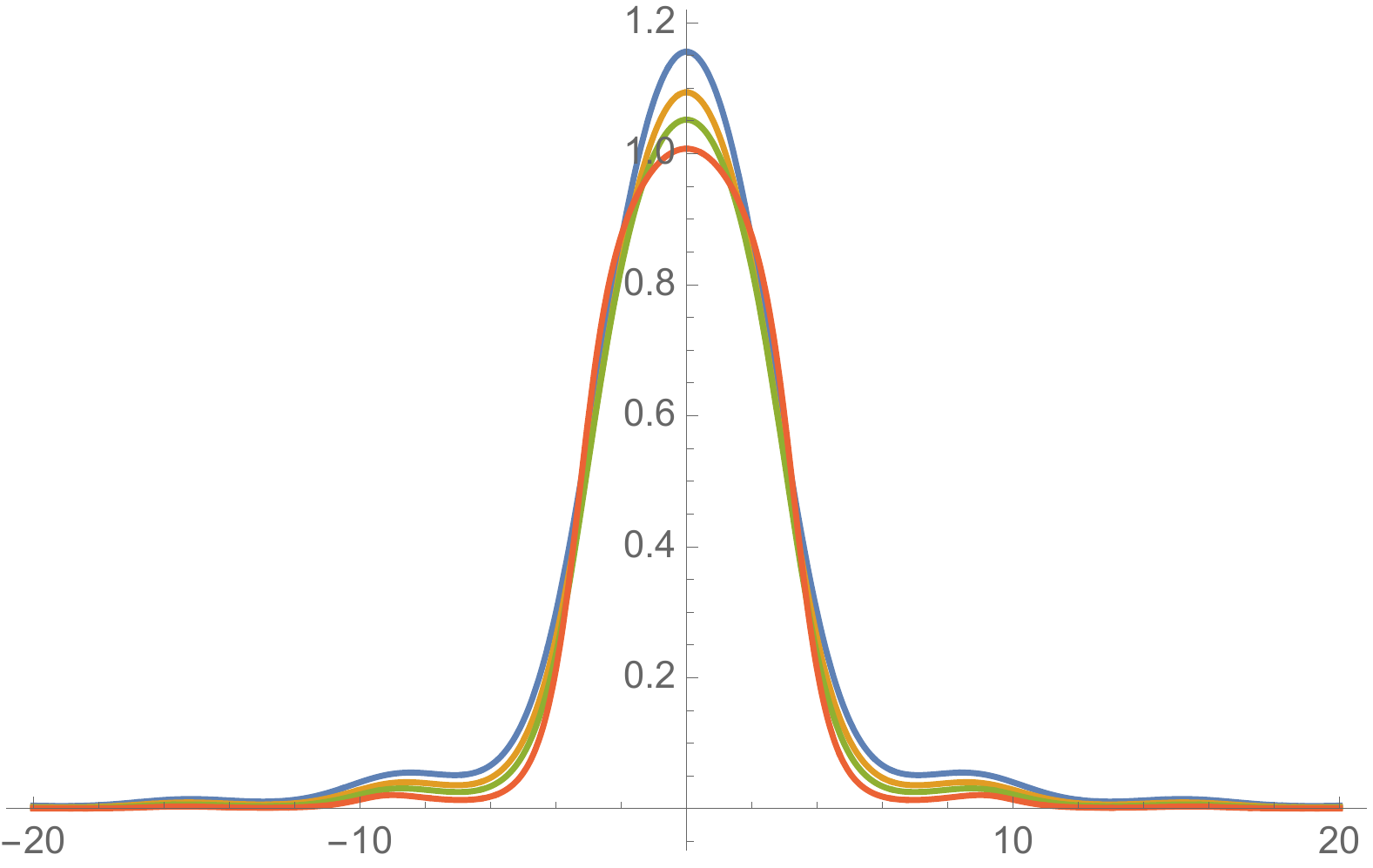}\qquad\includegraphics[width=6cm, height= 4cm]{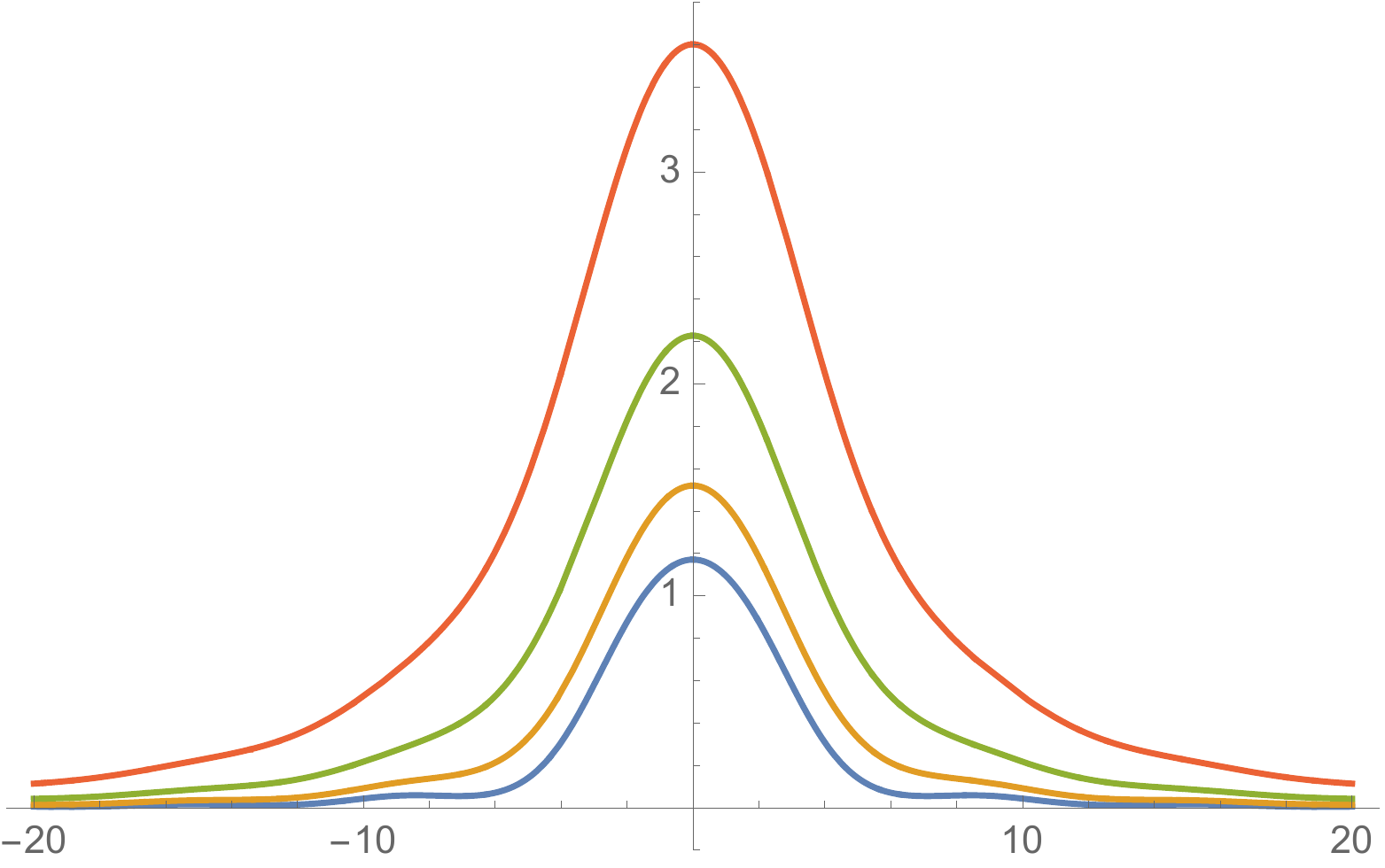}
\caption{$\abs{h(\xi, a, \sigma)}$ for fixed $a$ and varying $\sigma$ (left) and for fixed $\sigma$ and varying $a$ (right).}
\end{center}
\end{figure}

\begin{figure}[h!]
\begin{center}
\includegraphics[width=6cm, height= 4cm]{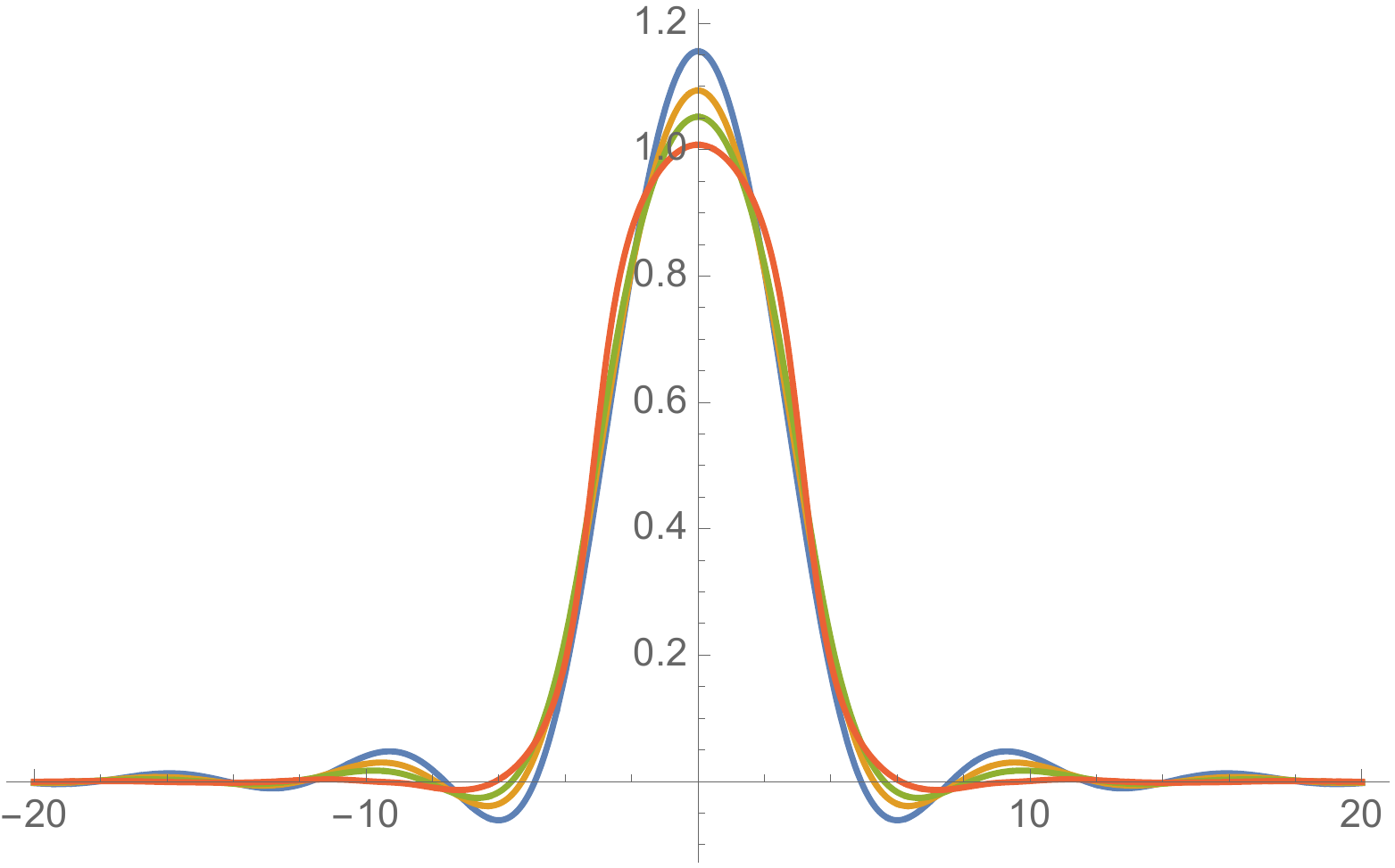}\qquad\includegraphics[width=6cm, height= 4cm]{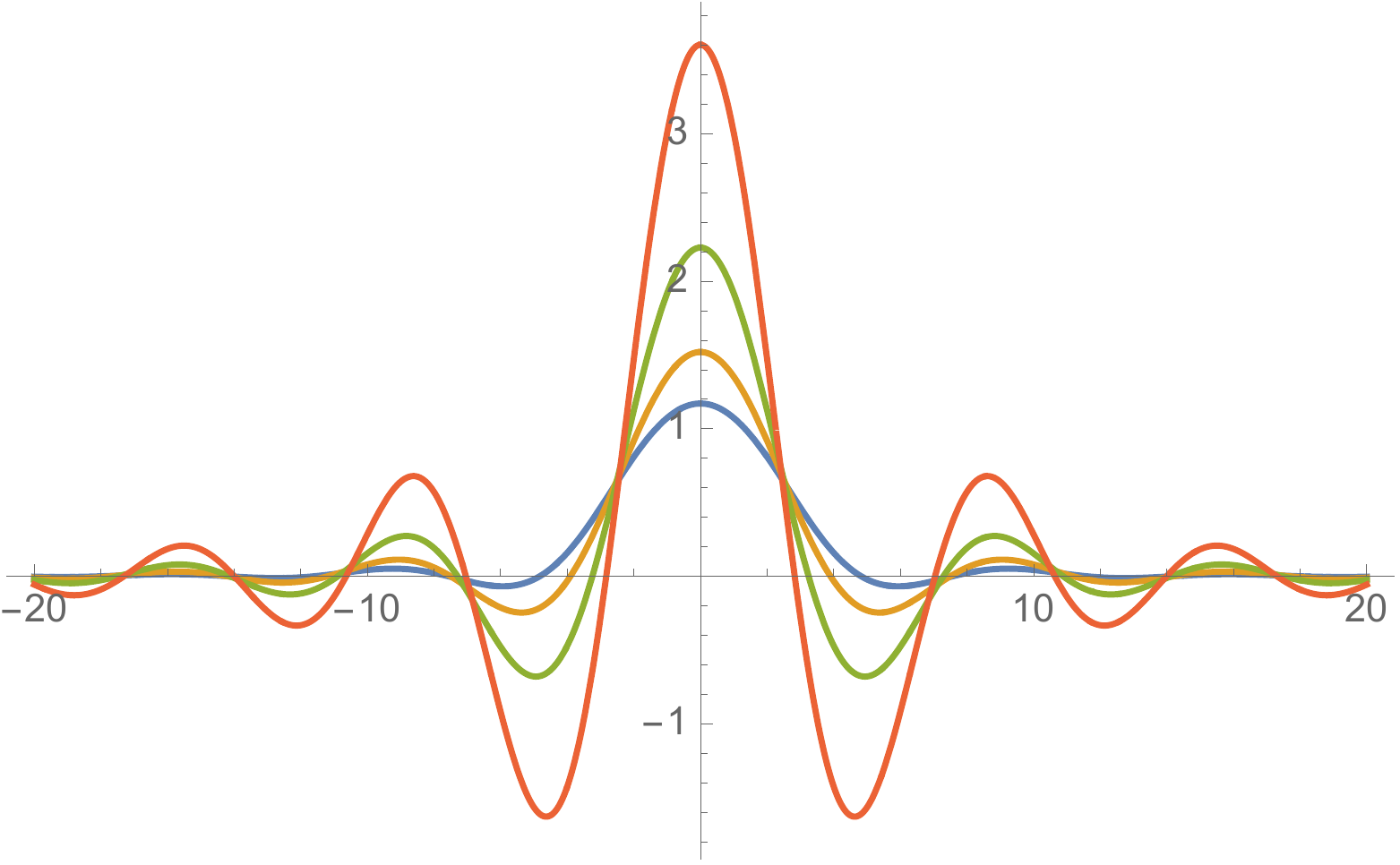}
\caption{$\Re h(\xi, a, \sigma)$ for fixed $a$ and varying $\sigma$ (left) and for fixed $\sigma$ and varying $a$ (right).}
\end{center}
\end{figure}

\begin{figure}[h!]
\begin{center}
\includegraphics[width=6cm, height= 4cm]{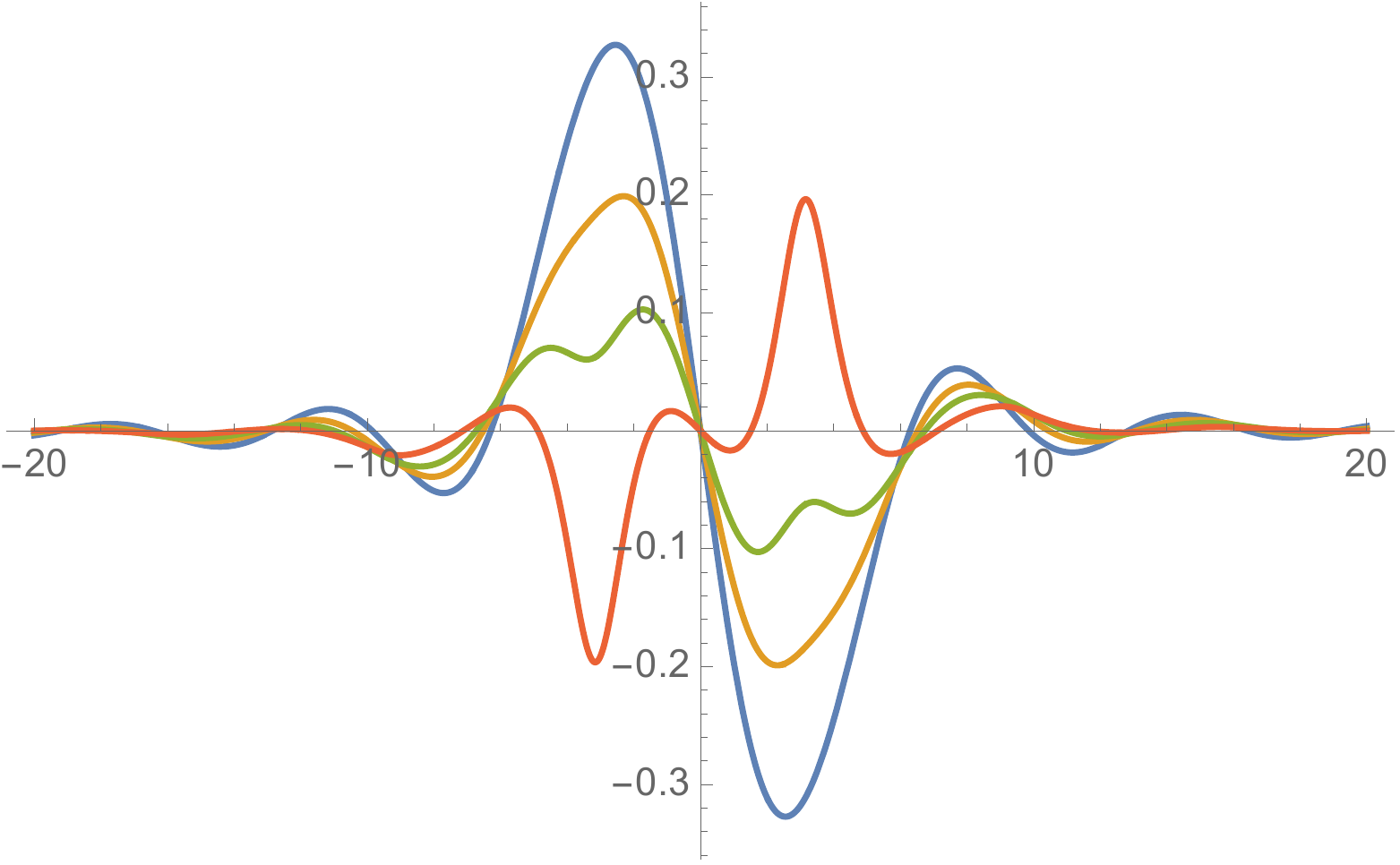}\qquad\includegraphics[width=6cm, height= 4cm]{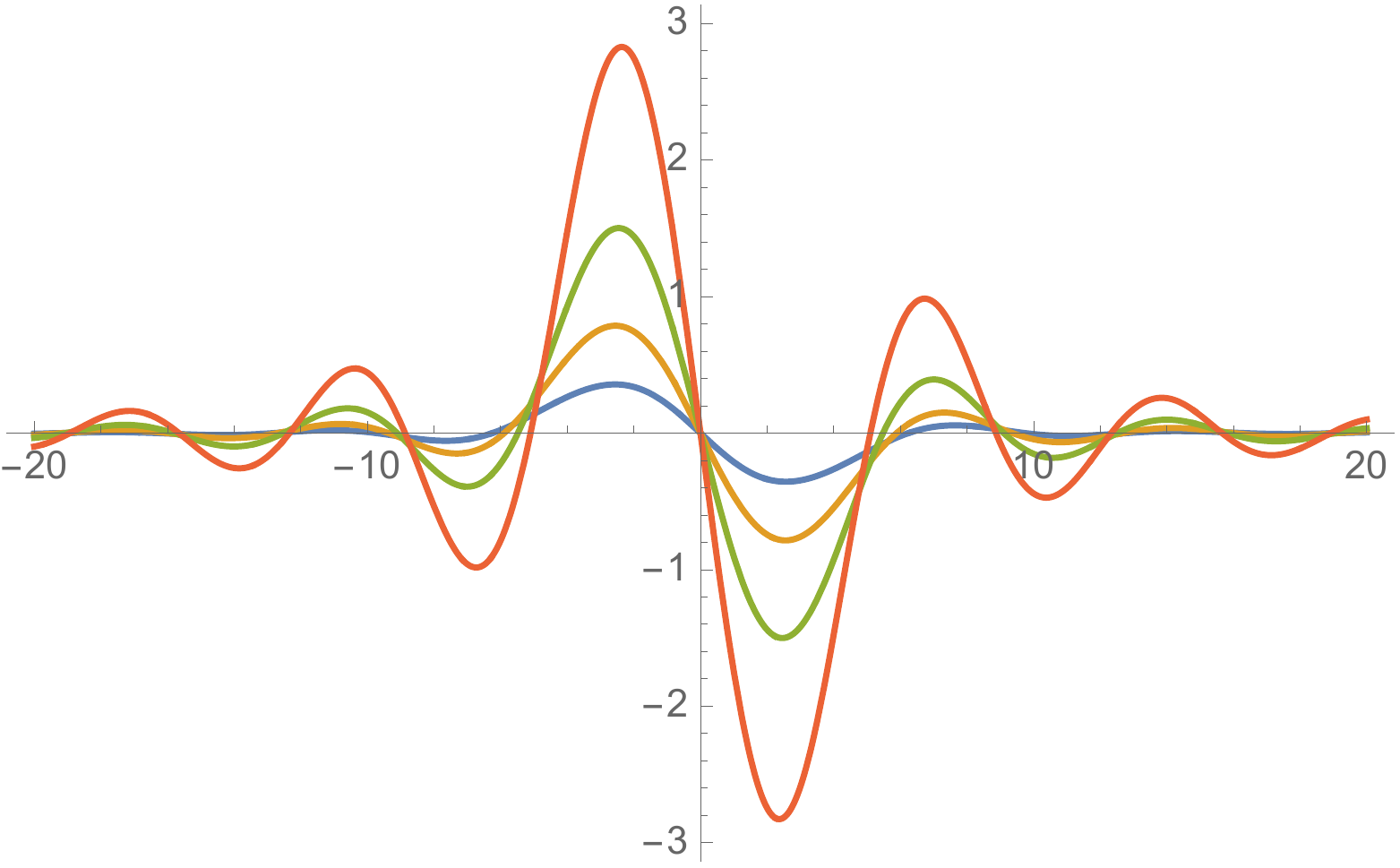}
\caption{$\Im h(\xi, a, \sigma)$ for fixed $a$ and varying $\sigma$ (left) and for fixed $\sigma$ and varying $a$ (right).}
\end{center}
\end{figure}

\begin{example}
We choose again $a:=2$ and $\sigma \in\{\sqrt{6}, 3.5, 4.25\}$. Figure \ref{fig4} below displays the graph of the fundamental exponential interpolating spline $L_2^{\sigma}$.
\end{example}
\begin{figure}[h!]
\begin{center}
\includegraphics[width=10cm, height= 5cm]{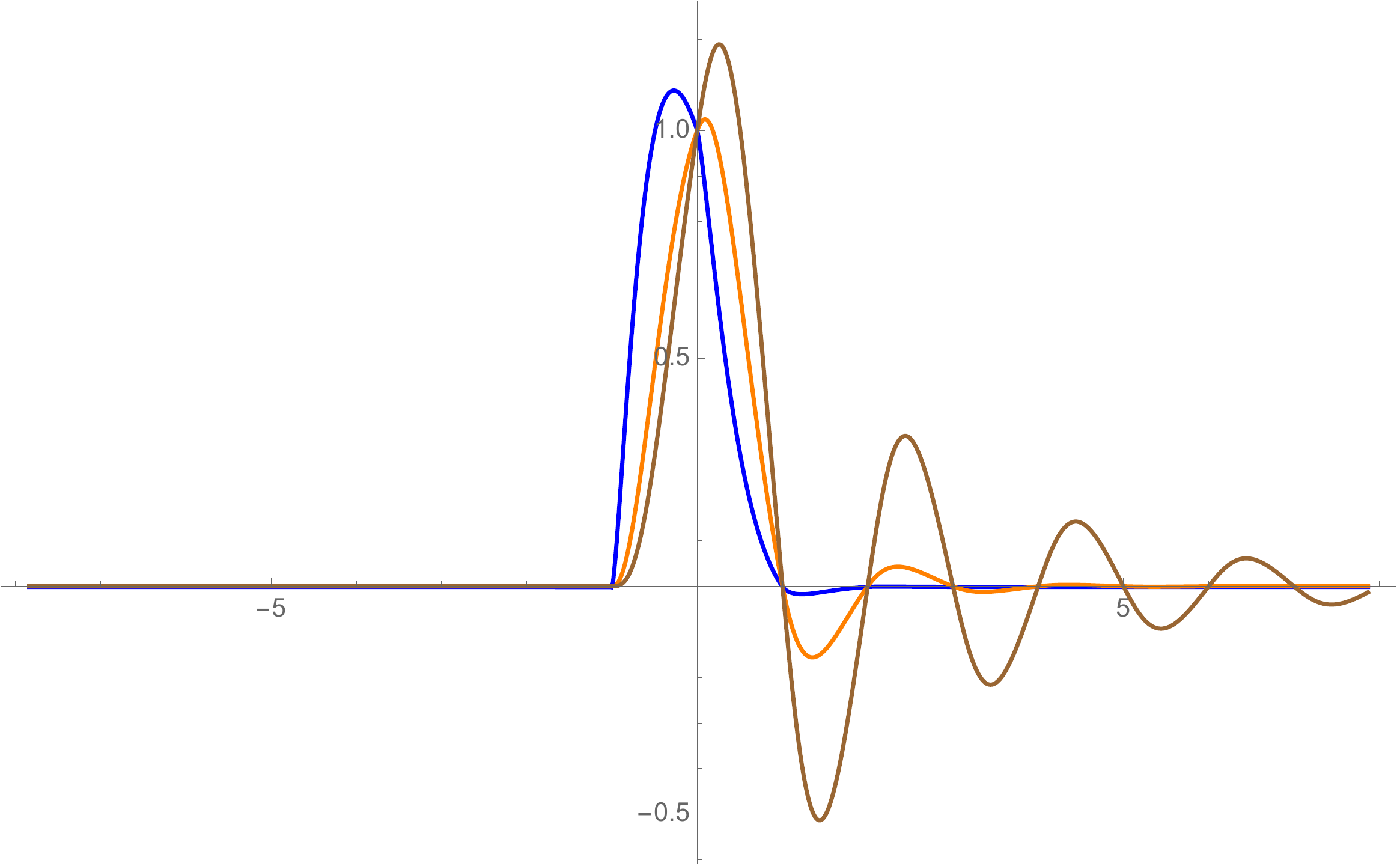}\
\caption{The fundamental exponential interpolating splines $L_2^{\sigma}$ with $\sigma \in\{\sqrt{6}, 3.5, 4.25\}$.}\label{fig4}
\end{center}
\end{figure}
\begin{proposition}
The coefficients $c_k^{(\sigma)}$ in Eqn. \eqref{compint2} decay like
\[
\abs{c_k^{(\sigma)}} \leq C_\sigma\,\abs{k}^{\lfloor\sigma\rfloor -1},
\]
for some positive constant $C_\sigma$. Therefore, the fundamental exponential spline $L_a^\sigma$ with admissible $\sigma$ satisfies the pointwise estimate
\[
\abs{L_a^\sigma (x)} \leq M_q\,\abs{x}^{-\lfloor\sigma\rfloor}, \quad x\in\R, 
\]
where $M_\sigma$ denotes a positive constant.
\end{proposition}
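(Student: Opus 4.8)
The plan is to identify the coefficients $c_k^{(\sigma)}$ as the Fourier coefficients of an explicit smooth $2\pi$-periodic symbol and then to transfer their decay to $L_a^\sigma$ through the defining series \eqref{compint2}. Taking the Fourier transform of \eqref{compint2} and comparing with \eqref{compfundspline} gives $\sum_{k\in\Z}c_k^{(\sigma)}e^{-ik\xi}=1/\Phi(\xi)$, where $\Phi(\xi):=\sum_{j\in\Z}\wh{E_a^\sigma}(\xi+2\pi j)$ is the denominator in \eqref{compfundspline}; hence $c_k^{(\sigma)}=\frac{1}{2\pi}\int_0^{2\pi}\Phi(\xi)^{-1}e^{ik\xi}\,d\xi$, and the problem reduces to the smoothness of $1/\Phi$.

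First I would show $\Phi\in C^\infty(\R)$ and $\Phi(\xi)\neq 0$ for every $\xi$. Smoothness comes from differentiating the closed form in \eqref{expspline}: since $a>0$, the factor $(a+i\xi)^{-\sigma}$ is smooth and the $2\pi$-periodic factor $(1-e^{-(a+i\xi)})^\sigma$ is smooth and bounded (its base stays in the right half-plane because $\abs{e^{-(a+i\xi)}}=e^{-a}<1$), so every derivative $(\wh{E_a^\sigma})^{(\ell)}$ still decays like $\abs{\xi}^{-\sigma}$, the periodic factor contributing only bounded terms as in Proposition \ref{prop1}. Consequently $\sum_j(\wh{E_a^\sigma})^{(\ell)}(\xi+2\pi j)$ converges absolutely and uniformly for $\sigma>1$, giving $\Phi\in C^\infty$. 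Non-vanishing of $\Phi$ is exactly what the theorem above establishes, since $\Phi$ is the denominator of \eqref{compfundspline} and, after cancellation of the nowhere-vanishing periodic factor $(1-e^{-(a+i\xi)})^\sigma$, reduces to the zeta combination $Z(\sigma,q)$ proved to have no zeros for admissible $\sigma$. As $\Phi$ is continuous, $2\pi$-periodic and zero-free, $\inf_\xi\abs{\Phi(\xi)}>0$, so $1/\Phi$ and all of its derivatives are bounded. Then $m$-fold integration by parts in $c_k^{(\sigma)}=\frac{1}{2\pi}\int_0^{2\pi}\Phi^{-1}e^{ik\xi}\,d\xi$, whose boundary terms cancel by periodicity, yields $\abs{c_k^{(\sigma)}}\leq C_m\abs{k}^{-m}$ for every $m$; taking $m=\lfloor\sigma\rfloor$ already gives the asserted coefficient decay (in fact $1/\Phi\in C^\infty$ makes $\{c_k^{(\sigma)}\}$ rapidly decreasing).

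To obtain the pointwise bound I would feed this back into \eqref{compint2}. The decisive structural feature, read off from the time-domain representation \eqref{timerep}, is that $E_a^\sigma$ is supported on $[0,\infty)$ and decays exponentially, $\abs{E_a^\sigma(x)}\leq C\,e^{-ax}(1+x)^{\sigma-1}$ for $x\geq 0$, so that $\sum_{j\geq 0}\sup_{t\in[j,j+1]}\abs{E_a^\sigma(t)}<\infty$. In $L_a^\sigma(x)=\sum_k c_k^{(\sigma)}E_a^\sigma(x-k)$ only the indices $k\leq x$ contribute, and the exponential localization concentrates the sum on $k$ close to $x$. Splitting into $k$ near $x$ and $k$ far from $x$, bounding $\abs{c_k^{(\sigma)}}\leq C\abs{k}^{-\lfloor\sigma\rfloor}\leq C'\abs{x}^{-\lfloor\sigma\rfloor}$ on the near range and using the exponential smallness of $E_a^\sigma(x-k)$ on the far range, transfers the algebraic decay of the coefficients to $L_a^\sigma$ and yields $\abs{L_a^\sigma(x)}\leq M_\sigma\abs{x}^{-\lfloor\sigma\rfloor}$ as $\abs{x}\to\infty$; since $L_a^\sigma$ is continuous and bounded (being the $L^1$-Fourier inverse of $h$ from the theorem above), the remaining bounded range of $x$ is covered.

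The routine part is the coefficient decay, which needs only the non-vanishing quoted from the theorem and the derivative estimate patterned on Proposition \ref{prop1}. The main obstacle is the bookkeeping in the transfer step: one must control the two-sided sum against the one-sided, exponentially localized $E_a^\sigma$ uniformly in $x$ and be careful not to lose a power of $\abs{x}$, and this counting is what pins down the precise exponent. A cleaner alternative for the pointwise estimate bypasses the series entirely: since $\wh{L_a^\sigma}=h$ is smooth with every derivative in $L^1(\R)$ (each $h^{(\ell)}$ again decays like $\abs{\xi}^{-\sigma}$, the factor $1/Z$ contributing only bounded periodic terms), repeated integration by parts in \eqref{L} gives $\abs{L_a^\sigma(x)}\leq(2\pi)^{-1}\Norm h^{(m)}\Norm_{L^1}\abs{x}^{-m}$ directly, with $m=\lfloor\sigma\rfloor$.
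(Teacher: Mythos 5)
Your proposal is correct, and its skeleton matches the paper's: identify $\{c_k^{(\sigma)}\}$ as the Fourier coefficients of the reciprocal of the $2\pi$-periodic symbol (the paper writes this object, via Poisson summation, as $\vartheta_a^\sigma(w) = 1/\sum_{k}E_a^\sigma(k)w^k$, which is the same as your $1/\Phi$ up to the sign convention in the exponent), quote the Theorem for non-vanishing, and push the coefficient decay through the one-sided series \eqref{compint2}. Where you differ is in the two technical steps, and in both you are more careful --- and obtain more --- than the paper. For the coefficient decay, the paper appeals to the time-domain regularity $E_a^\sigma\in C^{\lfloor\sigma\rfloor-1}(\R)$ (Proposition \ref{prop3}) and concludes only a polynomial bound (stated, apparently with a sign typo, as $\abs{k}^{\lfloor\sigma\rfloor-1}$, while its own later estimate uses $\abs{k}^{-\lfloor\sigma\rfloor}$). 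You instead differentiate the closed form \eqref{expspline}, note that for $a>0$ both factors $(1-e^{-(a+i\xi)})^\sigma$ and $(a+i\xi)^{-\sigma}$ are globally smooth, so every derivative of $\wh{E_a^\sigma}$ is $\cO(\abs{\xi}^{-\sigma})$, and conclude $\Phi\in C^\infty$, hence rapid decay of the $c_k^{(\sigma)}$ --- strictly stronger (in fact, since $E_a^\sigma(k)$ vanishes for $k<0$ and decays exponentially for $k>0$, the symbol is analytic in an annulus and the $c_k^{(\sigma)}$ decay geometrically; neither you nor the paper needs this). For the transfer step, the paper uses only the boundedness of $E_a^\sigma$ and asserts $\sum_{k=-\infty}^{\lfloor x\rfloor}\abs{k}^{-\lfloor\sigma\rfloor}\leq M\abs{x}^{-\lfloor\sigma\rfloor}$, which as written does not hold: for $x\to+\infty$ the sum contains terms of size $1$, and for $x\to-\infty$ it is only of order $\abs{x}^{1-\lfloor\sigma\rfloor}$. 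Your near/far splitting, exploiting the exponential localization $\abs{E_a^\sigma(x)}\leq C e^{-ax}(1+x)^{\sigma-1}$ read off from \eqref{timerep}, is precisely the ingredient needed to close this step, so your version in effect repairs a gap in the paper's own argument. Finally, your alternative route --- repeated integration by parts in \eqref{L}, using that $h^{(m)}\in L^1(\R)$ for every $m$ --- bypasses the coefficients entirely and yields $\abs{L_a^\sigma(x)}\leq C_m\abs{x}^{-m}$ for all $m$, again stronger than the stated estimate; the paper has no counterpart to this.
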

\begin{proof}
Eqns. \eqref{compint2} and \eqref{compfundspline} together with the Poisson summation formula yield
\[
\sum_{k\in\Z} c_k^{(\sigma)} w^k = \frac{1}{\sum\limits_{k\in\Z} E_a^\sigma (k)\, w^k} =: \vartheta_a^\sigma (w), \quad w = e^{i \xi}.
\]
The function $\vartheta_a^\sigma (w)$ has no zeros on the unit circle $\abs{w}=1$ provided $\sigma$ is admissible. Proposition \ref{prop3} implies that the Fourier coefficients of $\vartheta_a^\sigma (w)$ satisfy
\[
\abs{c_k^{(\sigma)}} \leq C_\sigma\,\abs{k}^{\lfloor\sigma\rfloor -1},
\]
for some positive constant $C_\sigma$.

Noting that $\supp E_a^\sigma (\mydot - k) = [k,\infty)$, $k\in \Z$, we thus obtain using Eqn. \eqref{compint2}
\[
\abs{L_a^\sigma (x)} = \abs{\sum_{k=-\infty}^{\lfloor x\rfloor} c_k^{(\sigma)}\,E_a^\sigma (x-k)} \leq K_\sigma \sum_{k=-\infty}^{\lfloor x \rfloor} \abs{k}^{-\lfloor\sigma\rfloor} \leq M_q\,\abs{x}^{-\lfloor\sigma\rfloor}.
\]
Here, we used the boundedness of $E_a^\sigma$ on $\R$. (Cf. for instance \cite[Proposition 4.5]{m14}.) 
\end{proof}
\section{A Sampling Theorem}
In this section, we derive a sampling theorem for the fundamental cardinal exponential spline $L_a^\sigma$, where $\sigma$ satisfies conditions \eqref{sigma} and \eqref{2}. For this purpose, we employ the following version of Kramer's lemma \cite{kramer} which appears in \cite{garcia}. We summarize those properties that are relevant for our needs.
\begin{theorem}\label{gensamp}
Let $\emptyset\neq I$, $M\subseteq\R$ and let $\{\varphi_k: k\in\Z\}$ be an orthonormal basis of $L^2(I)$. Suppose that $\{S_k: k\in \Z\}$ is a sequence of functions $S_k: M\to\C$ and $\boldsymbol{t} := \{t_k\in \R: k\in \Z\}$ a numerical sequence in $M$ satisfying the conditions
\begin{enumerate}
\item[C1.]	$S_k(t_l) = a_k \delta_{kl}$, $(k,l)\in \Z\times \Z$, where $a_k\neq 0$;
\item[C2.]	${\sum\limits_{k\in \Z}}\,\vert S_k(t)\vert^2 < \infty$, for each $t\in \xi$.
\end{enumerate}
Define a function $K:I\times M \to \C$ by
\[
K(x,t) := \sum_{k\in \Z} S_k (t) \overline{\varphi_k} (x),
\]
and a linear integral transform $\mcK$ on $L^2 (I)$ by
\[
(\mcK f)(t) := \int_I f(x) K(x,t) \, dx.
\]
Then $\mcK$ is well-defined and injective. Furthermore, if the range of $\mcK$ is denoted by
\[
\mcH := \left\{g:\R\to\C : g = \mcK f, \,f\in L^2(I)\right\},
\]
then
\begin{enumerate}
\item[(i)]	$(\mcH, \inn{\cdot}{\cdot}_\mcH)$ is a Hilbert space isometrically isomorphic to $L^2(I)$,  $\mcH \cong L^2(I)$, when endowed with the inner product
\[
\inn{F}{G}_\mcH := \inn{f}{g}_{L^2(I)},
\]
where $F := \mcK f$ and $G = \mcK g$.
\item[(ii)] $\{S_k: k\in \Z\}$ is an orthonormal basis for $\mcH$.
\item[(iii)] Each function $f\in \mcH$ can be recovered from its samples on the sequence $\{t_k: k\in \Z\}$ via the formula
\[
f(t) = \sum_{k\in \Z} f(t_k)\,\frac{S_k (t)}{a_k}.
\]
The above series converges absolutely and uniformly on subsets of $\R$ where the kernel $K(\,\cdot\,, t)$ is bounded in $L^2(I)$.
\end{enumerate}
\end{theorem}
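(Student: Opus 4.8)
The plan is to recognize $\mcK$ as the unitary extension of the correspondence $\varphi_k\mapsto S_k$, so that all four assertions become transport statements through an isometry. First I would fix $t\in M$ and observe that since $\{\varphi_k\}$ is an orthonormal basis of $L^2(I)$ the conjugates $\{\overline{\varphi_k}\}$ form one as well, whence by condition C2 the function $x\mapsto K(x,t)=\sum_{k}S_k(t)\overline{\varphi_k}(x)$ converges in $L^2(I)$ and satisfies $\Norm K(\cdot,t)\Norm_{L^2(I)}^2=\sum_{k}\abs{S_k(t)}^2<\infty$. Consequently, for $f\in L^2(I)$ the integral $(\mcK f)(t)=\int_I f(x)K(x,t)\,dx$ is absolutely convergent by Cauchy--Schwarz, and expanding $f=\sum_k c_k\varphi_k$ with $c_k=\inn{f}{\varphi_k}$ and interchanging sum and integral (legitimate since $(c_k)\in\ell^2$ and $(S_k(t))\in\ell^2$) gives the key identity $(\mcK f)(t)=\sum_{k}c_k\,S_k(t)$. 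In particular $\mcK\varphi_k=S_k$.

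Second, injectivity follows immediately from C1: if $\mcK f\equiv 0$ on $M$, then evaluating the identity at $t=t_l$ gives $0=\sum_k c_k S_k(t_l)=c_l a_l$, and since $a_l\neq 0$ we get $c_l=0$ for every $l$, hence $f=0$. This makes the prescription $\inn{F}{G}_{\mcH}:=\inn{f}{g}_{L^2(I)}$ (with $F=\mcK f$, $G=\mcK g$) unambiguous, and by construction $\mcK:L^2(I)\to\mcH$ is a surjective linear isometry; transporting completeness of $L^2(I)$ along $\mcK$ yields that $\mcH$ is a Hilbert space isometrically isomorphic to $L^2(I)$, proving (i). Assertion (ii) is then automatic: an isometric isomorphism carries an orthonormal basis to an orthonormal basis, and we already identified $\mcK\varphi_k=S_k$, so $\{S_k\}$ is an orthonormal basis of $\mcH$.

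For the sampling formula (iii), I would first note that $\mcH$ is a reproducing kernel Hilbert space: writing $F=\mcK f\in\mcH$ with $f=\sum_k c_k\varphi_k$, the identity from the first step gives, for fixed $t$, $\abs{F(t)}=\abs{\sum_k c_k S_k(t)}\le \Norm f\Norm_{L^2(I)}\,(\sum_k\abs{S_k(t)}^2)^{1/2}=\Norm F\Norm_{\mcH}\,\Norm K(\cdot,t)\Norm_{L^2(I)}$, so point evaluation is a bounded functional on $\mcH$. Expanding $F$ in the orthonormal basis $\{S_k\}$ gives $F=\sum_k \inn{F}{S_k}_{\mcH}\,S_k$ in $\mcH$, and continuity of point evaluation upgrades this to the pointwise identity $F(t)=\sum_k \inn{F}{S_k}_{\mcH}\,S_k(t)$. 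Finally, C1 identifies the coefficients: evaluating at $t=t_l$ yields $F(t_l)=\sum_k\inn{F}{S_k}_{\mcH}\,a_k\delta_{kl}=\inn{F}{S_l}_{\mcH}\,a_l$, so $\inn{F}{S_l}_{\mcH}=F(t_l)/a_l$, which is exactly the claimed formula $F(t)=\sum_k F(t_k)\,S_k(t)/a_k$. The absolute and uniform convergence on $\{t:\Norm K(\cdot,t)\Norm_{L^2(I)}\le C\}$ follows from the tail estimate $\sum_{\abs{k}>N}\abs{\inn{F}{S_k}_{\mcH}}\,\abs{S_k(t)}\le C\,(\sum_{\abs{k}>N}\abs{\inn{F}{S_k}_{\mcH}}^2)^{1/2}$, which tends to $0$ independently of $t$.

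The genuinely delicate points are not the algebra but the two convergence upgrades: justifying the interchange of summation and integration that produces $(\mcK f)(t)=\sum_k c_kS_k(t)$, and, more importantly, passing from $\mcH$-norm convergence of the basis expansion to the pointwise and locally uniform convergence asserted in (iii). Both hinge on the square-summability in C2, which makes $x\mapsto K(x,t)$ an honest element of $L^2(I)$ and makes point evaluation continuous on $\mcH$; this reproducing-kernel structure is the crux, while C1 serves only to pin down the coefficients as normalized samples.
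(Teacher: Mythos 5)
Your proof is correct and complete: the key identity $(\mcK f)(t)=\sum_k \inn{f}{\varphi_k}S_k(t)$, injectivity via C1, transport of the Hilbert space structure and of the orthonormal basis through the isometry $\mcK$, and the reproducing-kernel estimate $\abs{F(t)}\le\Norm{F}\Norm_{\mcH}\,\Norm{K(\cdot,t)}\Norm_{L^2(I)}$ to upgrade norm convergence to pointwise, absolute and uniform convergence are exactly the ingredients needed, and each step (including the interchange of sum and integral and the conjugate basis remark) is justified. The paper itself gives no argument for this theorem, deferring entirely to the cited reference \cite{garcia}; your proof is essentially the standard proof of Kramer's lemma given there, so it matches the intended approach rather than diverging from it.
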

\begin{proof}
For the proof and further details, we refer to \cite{garcia}.
\end{proof}

For our purposes, we choose $M := \R$, $\boldsymbol{t} := \Z$, $a_k = 1$ for all $k\in\Z$, and for the interpolating function $S_{k} = L_a^{\sigma}(\cdot - k)$ with $\sigma$ satisfying conditions \eqref{sigma} and \eqref{2}. Then Theorem \ref{gensamp} implies the next result.

\begin{theorem}\label{S Abtastsatz}
Let $\emptyset\neq I\subseteq\R$ and let $\{\varphi_k: k\in\Z\}$ be an orthonormal basis of $L^2(I)$. Let $L_a^\sigma$ denote the fundamental cardinal spline of admissible real order $\sigma$. Then the following holds:
\begin{enumerate}
\item[(i)]	The family $\{L_a^\sigma (\cdot - k): k\in \Z\}$ is an orthonormal basis of the Hilbert space $(\mcH, \inn{\cdot}{\cdot}_\mcH)$, where $\mcH = \mcK (L^2(I))$ and $\mcK$ is the injective integral operator
\[
\mcK f = \sum_{k\in\Z} \inn{f}{\varphi_k}_{L^2(I)}\, L_a^\sigma (\cdot - k), \quad f\in L^2(I).
\]
\item[(ii)]	Every function $f\in \mcH \cong L^2(I)$ can be recovered from its samples on the integers via 
\begin{equation}
f = \sum_{k\in \Z} f(k) L_a^\sigma (\cdot - k),
\label{eq Kramer Abtastreihe}
\end{equation}
where the above series converges absolutely and uniformly on all subsets of $\R$.
\end{enumerate}
\end{theorem}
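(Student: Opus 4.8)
The plan is to verify that the specific choices $M:=\R$, $\boldsymbol{t}:=\Z$, $a_k:=1$, and $S_k:=L_a^\sigma(\cdot-k)$ meet hypotheses C1 and C2 of Theorem \ref{gensamp}, and then to transcribe conclusions (i)--(iii) of that theorem into the present assertions. The only point requiring work beyond a direct application is the upgrade of the convergence in (ii) to all of $\R$.

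First I would verify C1. Since $S_k(l)=L_a^\sigma(l-k)$ and the defining interpolation property $L_a^\sigma(m)=\delta_{m,0}$, $m\in\Z$, established in the main theorem above (and valid precisely because $\sigma$ is admissible, so that the denominator of $\wh{L_a^\sigma}$ does not vanish), we obtain $S_k(l)=\delta_{l-k,0}=\delta_{kl}$. Hence C1 holds with $a_k=1\neq 0$. Next I would verify C2, that is, $\sum_{k\in\Z}\abs{L_a^\sigma(t-k)}^2<\infty$ for each $t\in\R$. Here the pointwise decay estimate $\abs{L_a^\sigma(x)}\le M_\sigma\,\abs{x}^{-\lfloor\sigma\rfloor}$ proved above is the key input: I would split the sum into the finitely many indices with $\abs{t-k}\le 1$, on which $L_a^\sigma$ is bounded (being continuous by Proposition \ref{prop3}), and the tail $\abs{t-k}>1$, which is dominated by $M_\sigma^2\sum_{\abs{t-k}>1}\abs{t-k}^{-2\lfloor\sigma\rfloor}$; this converges since $\sigma>1$ forces $2\lfloor\sigma\rfloor\ge 2>1$. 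Thus C2 holds.

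With C1 and C2 in force, Theorem \ref{gensamp} applies directly. Its kernel is $K(x,t)=\sum_{k}L_a^\sigma(t-k)\,\overline{\varphi_k}(x)$, and interchanging summation and integration identifies the transform as $(\mcK f)(t)=\sum_k\inn{f}{\varphi_k}_{L^2(I)}\,L_a^\sigma(t-k)$, exactly the operator in the statement. Part (ii) of Theorem \ref{gensamp} then yields conclusion (i) here, namely that $\{L_a^\sigma(\cdot-k):k\in\Z\}$ is an orthonormal basis of $\mcH=\mcK(L^2(I))\cong L^2(I)$; part (iii), specialized to $a_k=1$ and $t_k=k$, yields the reconstruction formula $f=\sum_k f(k)\,L_a^\sigma(\cdot-k)$ of conclusion (ii).

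The main obstacle is strengthening the convergence in (ii) from ``on subsets where $K(\cdot,t)$ is bounded in $L^2(I)$'' to ``on all of $\R$''. I would establish the uniform bound $\sup_{t\in\R}\,\| K(\cdot,t)\|_{L^2(I)}^2<\infty$. By Parseval (as $\{\varphi_k\}$ is an orthonormal basis of $L^2(I)$), $\| K(\cdot,t)\|_{L^2(I)}^2=\sum_k\abs{L_a^\sigma(t-k)}^2=:g(t)$, and a reindexing $k\mapsto k-1$ shows that $g$ is $1$-periodic. The decay estimate supplies a summable majorant uniform in $t\in[0,1]$, since for $\abs{k}\ge 2$ one has $\abs{t-k}\ge\abs{k}-1$ and hence $\abs{L_a^\sigma(t-k)}^2\le M_\sigma^2(\abs{k}-1)^{-2\lfloor\sigma\rfloor}$; by the Weierstrass test the series defining $g$ converges uniformly on $[0,1]$, so $g$ is continuous and periodic, hence bounded on $\R$. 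Consequently $K(\cdot,t)$ is uniformly bounded in $L^2(I)$, and the reconstruction series in Theorem \ref{gensamp}(iii) converges absolutely and uniformly on all of $\R$, as claimed.
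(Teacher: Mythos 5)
Your proof is correct and follows the same overall route as the paper---both verify conditions C1 and C2 and then invoke Theorem \ref{gensamp}---but the one substantive step, the uniform bound on $\|K(\cdot,t)\|_{L^2(I)}$ that upgrades the convergence in (ii) to all of $\R$, is handled genuinely differently. The paper disposes of it in a single sentence by citing the Riesz basis property of the unfiltered splines $\{E_a^\sigma(\cdot-k): k\in\Z\}$ from \cite{m14}; you instead give a self-contained argument: Parseval reduces the bound to $g(t)=\sum_{k}\abs{L_a^\sigma(t-k)}^2$, which is $1$-periodic under the shift $k\mapsto k-1$, and the pointwise decay $\abs{L_a^\sigma(x)}\le M_\sigma\abs{x}^{-\lfloor\sigma\rfloor}$ (with $\lfloor\sigma\rfloor\ge 1$ for admissible $\sigma$) supplies a Weierstrass majorant on $[0,1]$. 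Your route is more elementary and arguably more complete: the paper leaves the passage from the Riesz basis property of the $E_a^\sigma$-translates to the kernel bound for the \emph{filtered} splines $L_a^\sigma$ unexplained, whereas your argument uses only estimates already established in the paper; the same mechanism also makes your verification of C2 explicit, which the paper declares ``readily verified.'' One citation slip worth fixing: you justify local boundedness of $L_a^\sigma$ by continuity ``by Proposition \ref{prop3},'' but that proposition concerns $E_a^\sigma$, and since $L_a^\sigma$ is an infinite sum of shifts of $E_a^\sigma$ (each supported on a half-line $[k,\infty)$), continuity does not transfer term-by-term without controlling the series. The cleanest repair is to note that $L_a^\sigma$ is the inverse Fourier transform of an $L^1$ function---the main theorem states that the inversion in \eqref{L} holds in the $L^1$ sense---hence $L_a^\sigma$ is continuous and globally bounded, which in fact also simplifies your treatment of the finitely many near terms in both C2 and the uniform bound.
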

\begin{proof}
Conditions C1. and C2. for $S_{k }= L_{a}^\sigma(\cdot -k)$, $k\in\Z$, in Theorem \ref{gensamp} are readily verified. Since the unfiltered splines $\{E_a^\sigma (\cdot - k): k\in \Z\}$ already form a Riesz basis of the $L^2$-closure of their span \cite{m14}, $\|K(\cdot, t)\|_{L^2(I)}$ is bounded on $\R$.
\end{proof}

Finally, we consider two examples illustrating the above theorem. These examples can also be found in \cite{FM} in case one deals with cardinal polynomial B-splines of fractional order. 
\begin{example}
Consider $L^2[\,0,2\pi\,]$ with orthonormal basis $\{\exp(ik\,(\cdot))\}_{k\in\Z}$. Then 
$$
K(x,t) = \sum_{k\in\Z} L_{a}^\sigma(t-k) \exp(-ikx)
$$
and
\begin{eqnarray*}
\mcK f(t) & = & \int_{0}^{2\pi} f(x) \sum_{k\in\Z} L_{a}^\sigma(t-k) \exp(-ikx)\, dx 
\\
& = & \sum_{k\in\Z} \int_{0}^{2 \pi} f(x) \exp(-ikx)\, dx \, L_{a}^\sigma(t-k) 
\\
& =& 2 \pi \sum_{k\in\Z} \widehat{f}(k) L_{a}^\sigma(t-k).
\end{eqnarray*}
This equation holds in $L^2$-norm and we applied the Lebesgue dominated convergence theorem. Thus, $\mcK f$ interpolates the sequence of Fourier coefficients $\{ \widehat{f}(k)\}_{k\in\Z}$ on $\R$ with shifts of the fundamental cardinal spline $L_{a}^\sigma$ of real order $\sigma$. 

Moreover, if $f\in \mcH = \mcK(L^2[\,0,2\pi\,])\cong L^2[\,0,2\pi\,]$, then, by Theorem \ref{S Abtastsatz}, $f$ can be reconstructed from its samples by the similar series
$$
f = \sum_{k\in\Z} f(k) L_{a}^\sigma(\cdot -k),
$$
which converges absolutely and uniformly on all subsets of $\R$.
\end{example}

\begin{example}
Consider $L^2(\R)$ endowed with the (orthonormal) Hermite basis defined by
$$
\varphi_{k}(x) = \frac{(-1)^k}{k!} \exp\left(\frac{x^2}{2}\right) \left(\frac{d}{dx}\right)^k \exp(-x^2), \quad k\in\N_{0}.
$$
Then 
$$
K(x,t) = \sum_{k\in\Z} L_{z}(t-k) \varphi_{p(k)}(x),
$$
where $p: \N_{0}\to \Z$ maps the natural numbers bijectively to the integers.
An application of the Lebesgue dominated convergence theorem yields 
\begin{eqnarray*}
\mcK f (t) & = & \int_{\R} f(x) \sum_{k\in\Z} L_{z}(t-k) \varphi_{p(k)}(x) \, dx
\\
& = & \sum_{k\in\Z} \int_{\R} f(x) \varphi_{p(k)}(x)\, dx\,  L_{z}(t-k).
\end{eqnarray*}
The integral represents the coefficients of $f$ in the orthonormal basis $\{\varphi_{k}\}_{k\in\N_{0}}$.
Again by Theorem \ref{S Abtastsatz}, all functions $f \in  \mcH = \mcK(L^2(\R))\cong L^2(\R)$ can be reconstructed from its samples at the integers via the series (\ref{eq Kramer Abtastreihe}).
\end{example}

\bibliographystyle{plain}
\bibliography{Interpolation_and_sampling}

\end{document}